\newcommand\ledot{~\dot{\le}~}
\let\oldnl\nl
\newcommand{\nonl}{\renewcommand{\nl}{\let\nl\oldnl}}
\newcommand{\altline}[1]{\SetNlSty{}{\underline}{} \quad #1 \SetNlSty{}{}{}}
\pgfplotsset{
    width=7cm, height=6.75cm,
    compat=1.18,
    table/search path={figures},
    grid=both,
    grid style={line width=.4pt, draw=gray!40},
    major grid style={line width=.4pt,draw=gray!70},
    every axis plot/.append style={line width=1.0pt, mark size = 2pt},
    legend style={font=\small},
    /tikz/every column/.append style={column sep=0.4cm},
    legend cell align={left}
}
\colorlet{colorm1}{Turquoise!60}
\colorlet{colorm2}{Purple}
\colorlet{colorm3}{RoyalBlue}
\colorlet{colorm4}{Red}
\colorlet{colorm5}{Black}
\colorlet{colorm6}{Green}
\colorlet{colorm7}{Brown}
\setlist[enumerate,1]{labelindent=\dimexpr\the\leftmargini + 2pt\relax}
\setlist[enumerate,2]{labelindent=\dimexpr\the\leftmarginii + 2pt\relax}
\setlist[enumerate,3]{labelindent=\dimexpr\the\leftmarginiii + 2pt\relax}
\setlist[enumerate,4]{labelindent=\dimexpr\the\leftmarginiv + 2pt\relax}
\setlist[enumerate]{%
    leftmargin=!,
    labelsep=1em,
    partopsep=0pt,
    itemsep=0pt,
    parsep=0pt,
    topsep=7pt
}
\setlist[itemize,1]{labelindent=\the\leftmargini}
\setlist[itemize,2]{labelindent=3mm}
\setlist[itemize,3]{labelindent=\the\leftmarginiii}
\setlist[itemize,4]{labelindent=\the\leftmarginiv}
\setlist[itemize]{%
    leftmargin=!,
    labelsep=1em,
    partopsep=0pt,
    itemsep=0pt,
    parsep=0pt,
    topsep=7pt
}
\newcommand{\C}{\ensuremath{\mathbb C}}
\newcommand{\Cnn}{\ensuremath{\C^{n \times n}}}
\newcommand{\Cmn}{\ensuremath{\C^{m \times n}}}
\newcommand{\Cpq}{\ensuremath{\C^{p \times q}}}
\theoremstyle{thmstyletwo}%
\newtheorem{theorem}{Theorem}
\newtheorem{lemma}[theorem]{Lemma}
\theoremstyle{definition}
\numberwithin{equation}{section}
\DeclareMathOperator{\diag}{diag}
\DeclareMathOperator{\opvec}{vec}
\DeclareMathOperator{\fl}{fl}
\newcommand{\wh}{\widehat}
\newcommand{\wt}{\widetilde}
\renewcommand{\Delta}{\varDelta}
\newcommand{\deq}{\ensuremath{:=}}
\newcommand{\deqr}{\ensuremath{=:}}
\DeclareMathOperator{\cond}{\mu}
\renewcommand{\Lambda}{\varLambda}
\renewcommand{\Sigma}{\varSigma}
\renewcommand{\Gamma}{\varGamma}
\newcommand{\algnaive}{\texttt{naive}}
\newcommand{\algss}{\texttt{sqrt\_schur}}
\newcommand{\algsc}{\texttt{chol\_schur}}
\newcommand{\algscp}{\texttt{chol\_schur\_pd}}
\newcommand{\algscsc}{\texttt{schur\_schur}}
\newcommand{\D}[2]{D#1(#2)}
\newcommand{\Df}[3]{D#1(#2)[#3]}
\date{April 2026}
\colorlet{revcol}{black}
\begin{document}

\title{Computing matrix functions associated with a Hermitian--definite pencil}

\author{%
  Dario A. Bini%
  \footnote{Department of Mathematics, University of Pisa, Largo B.
  Pontecorvo, 5, 56127, Pisa, Italy.}
  \and
  Massimiliano Fasi%
  \footnote{School of Computer Science, University of Leeds, Woodhouse Lane, Leeds
  LS2 9JT, UK.}
  \and
  Bruno Iannazzo%
  \footnote{Department of Mathematics and Computer Science, University of
  Perugia, Italy.}}

\date{}

\maketitle

\abstract{We consider the numerical evaluation of the quantity $Af(A^{-1}B)$, where $A$ is Hermitian positive definite, $B$ is Hermitian, and $f$ is a function defined on the spectrum of $A^{-1}B$.
  {\color{revcol} This problem is related to the Hermitian--definite matrix pencil $B-\lambda A$}.
  We study the conditioning of the problem, and we introduce several algorithms that combine the Schur decomposition with either the matrix square root or the Cholesky factorization.
  We study the numerical behavior of these algorithms in floating-point arithmetic, assess their computational costs, and compare their numerical performance.
  Our analysis suggests that the algorithms based on the Cholesky factorization will be more accurate and efficient than those based on the matrix square root.
  This is confirmed by our numerical experiments.

\paragraph{Keywords.}primary matrix function; positive definite matrix; Cholesky factorization; Schur decomposition; matrix square root; matrix geometric mean; Hermitian--definite matrix pencil}

\section{Introduction}
\label{sec:intro}

A function $f:\mathcal I \subset \mathbb R \rightarrow \C$ can be extended to diagonalizable matrices with eigenvalues in $\mathcal I$ as a primary matrix function.
The concept is thoroughly discussed in the monograph by Higham~\cite{higham:book08}.

{\color{revcol} Let $A \in \Cnn$ be Hermitian positive definite, let $B \in \Cnn$ be Hermitian, and let $A^{1/2}$ denote the principal square root of $A$. Since $A^{-1}B=A^{-1/2}(A^{-1/2}BA^{-1/2})A^{1/2}$,  the matrix $A^{-1}B$ is similar to the Hermitian matrix $A^{-1/2}BA^{-1/2}$ and is therefore diagonalizable with real eigenvalues. One} can leverage the concept of primary matrix function to give meaning to the expression
\begin{equation}\label{eq:1}
    \varphi(A,B) \deq{} Af(A^{-1}B),
\end{equation}
where $f$ is defined on the spectrum of $A^{-1}B$.
It is often convenient to consider the formulation
\begin{equation}\label{eq:2}
\varphi(A,B) = A^{1/2}f(A^{-1/2}BA^{-1/2})A^{1/2},
\end{equation}
where $A^{1/2}$ is the principal square root of $A$, which is equivalent to~\eqref{eq:1} but has a symmetric form.

Here, we consider the problem of computing $\varphi(A,B)$ as efficiently as possible, taking into account both computational cost and numerical stability.

Expressions like \eqref{eq:1} and \eqref{eq:2} appear frequently in the applied mathematics literature. Kubo and Ando~\cite{kuan80} use the formulation~\eqref{eq:2}, for $B$ positive definite, to define means of positive definite operators (and in particular matrices) in a systematic way.
Notable cases are the arithmetic, harmonic, and geometric means, which are obtained for
\[
    f(x)=\frac{1+x}{2},\quad
    f(x)=\frac{2x}{1+x},\quad\text{and}\quad
    f(x)=\sqrt{x},
\]
respectively.
Setting $f(x)=x^t$, for $t\in[0,1]$, yields the weighted geometric mean
$A\#_t B \deq A(A^{-1}B)^t$,
and the weighted power mean of two matrices is obtained with $f(x)=((1-t)+tx^p)^{1/p}$ for $p\ne 0$. We refer the reader to the recent survey~\cite{bi24} for a discussion of geometric means of two matrices.

Parlett~\cite[sect.~2]{parl76} considers $A$ and $B$ with no structure and discusses methods to compute $f(A^{-1}B)$ without forming $A^{-1}$. The relevant application is the system of ordinary differential equations
\[
    Au'(t)=Bu(t),\qquad u(0)=u_0,
\]
whose explicit solution is $u(t) = \exp(A^{-1}Bt)u_0$. Similar functions arise in the numerical solution of differential-algebraic equations~\cite{book}.

Functions like $f(A^{-1}B)$ or $\varphi(A,B)$ are related to the matrix pencil $B-\lambda A$, which appears in the generalized eigenvalue problem $(B-\lambda A)x=0$. When $A$ is nonsingular, the generalized eigenvalue problem is equivalent to the standard eigenvalue problem $(A^{-1}B-\lambda I)x=0$. For this reason, Parlett~\cite{parl76} refers to \(f(A^{-1}B)\) as a \emph{function of the pencil} $B-\lambda A$. {\color{revcol}In our case we use the symmetric form $Af(A^{-1}B)$ and the pencil $B-\lambda A$ is {\em Hermitian--definite}, in the sense that $B$ is Hermitian and $A$ is positive definite.}

Two pencils are spectrally equivalent if they have the same sets of eigenvalues.
The pencil $\widetilde B-\lambda\widetilde A$ is spectrally equivalent to $B-\lambda A$ if there exist two nonsingular matrices $P$ and $Q$ such that $\widetilde B=PBQ$ and $\widetilde A= PAQ$.
The function $\varphi$ is compatible with pencil equivalence, as shown by property~\ref{it:p2} of the following \cref{thm:4}, which summarizes some of the properties of~$\varphi$.

\begin{theorem}\label{thm:4}
Let $A \in \Cnn$ be a positive definite Hermitian matrix, let $B \in \Cnn$ be Hermitian, let $\varphi(A,B) \deq{} Af(A^{-1}B)$, with $f:\mathcal I\rightarrow \C$, $\mathcal I\subset \mathbb R$, and let $\lambda_1,\ldots,\lambda_n$ be the eigenvalues of the pencil $B-\lambda A$.
Then, we have:
\begin{enumerate}[label=(\alph*), ref=(\alph*)]
\item $Af(A^{-1}B) = A^{1/2}f(A^{-1/2}BA^{-1/2})A^{1/2} = f(BA^{-1})A$ (symmetry);\label{it:p1}
\item $P\varphi(A,B)Q=\varphi(PAQ,PBQ)$, for $P,Q\in\C^{n\times n}$ nonsingular matrices (commutativity with pencil equivalence); \label{it:p2}
\item $\varphi(\alpha A,\alpha B)=\alpha\varphi(A,B)$, for $\alpha\in\mathbb C\setminus\{0\}$ (homogeneity);\label{it:p3}
\item a nonsingular $M$ exists such that $M^*\varphi(A,B)M=\varphi(I,D)=f(D)$, where $D=\diag(\lambda_1,\ldots,\lambda_n)$;\label{it:p4}
\item $\varphi(A,B)=Ap(A^{-1}B)$, where $p$ is the polynomial interpolating $f$ at $\lambda_1,\ldots,\lambda_n$; in particular, if $n=2$ then
\begin{equation}\label{eq:15}
\varphi(A,B) =
\frac{f(\lambda_2)\lambda_1-f(\lambda_1)\lambda_2}{\lambda_1-\lambda_2}A
+\frac{f(\lambda_1)-f(\lambda_2)}{\lambda_1-\lambda_2} B;
\end{equation}\label{it:p5}
\item if $A$ and $B$ are block diagonal matrices with the same block structure, then $\varphi(A,B)$ is block diagonal with the same structure;\label{it:p6}
\item if $f(\mathcal I)\subset \mathbb R$ then $\varphi(A,B)$ is Hermitian; if $f(\mathcal I)\subset (0,\infty)$ then $\varphi(A,B)$ is positive definite.
\end{enumerate}
\end{theorem}
\begin{proof}
{\color{revcol}\Cref{it:p1,it:p2} follow from the commutativity of primary matrix functions with similarities: if $f$ is defined on the spectrum of $A$ and $M$ is nonsingular, then $f(M^{-1}AM)=M^{-1}f(A)M$. \Cref{it:p3} is straightforward. \Cref{it:p4} and (g) follow from (c) choosing $M=A^{-1/2}U$, where $U^*(A^{-1/2}BA^{-1/2})U=D$. Indeed, $\varphi(A,B)=A^{1/2}Uf(D)U^*A^{1/2}$ that is Hermitian [positive definite] if $f(D)$ is real [positive]. \Cref{it:p5,it:p6} follow from the analogous properties of primary matrix functions~\cite[Chap.~1]{higham:book08}.}
\end{proof}

If $B$ is nonsingular, swapping $A$ and $B$ is often useful in the analysis of~\eqref{eq:1} and~\eqref{eq:2}. For this purpose, it is convenient to introduce the dual function $\widehat f(x) \deq{} xf(x^{-1})$.
The dual of $\widehat f$ is $f$, and if $f$ is defined on the spectrum of $A^{-1}B$, then $\widehat f$ is defined on the spectrum of $B^{-1}A$ and $Af(A^{-1}B)=B\widehat f(B^{-1}A)$.
\Cref{tab:1} lists some examples of functions together with their duals.

\begin{table}[t]
\centering
\caption{Examples of dual functions,
with $t\in[0,1]$ and $p\ne 0$.}\label{tab:1}
\begin{tabularx}{\textwidth}
{X%
>{\centering\arraybackslash}p{3cm}%
>{\centering\arraybackslash}p{0.8cm}%
>{\centering\arraybackslash}p{3cm}%
>{\centering\arraybackslash}p{1.5cm}%
>{\centering\arraybackslash}p{1.5cm}%
}
\toprule
Expression for $f$ & $((1-t)+tx)/2$ & $x^t$ & $((1-t)+tx^p)^{1/p}$ & $\exp(x)$ & $\log(x)$ \\
Corresponding $\widehat f$ & $((1-t)x+t)/2$ & $x^{1-t}$ & $((1-t)x^p+t)^{1/p}$ & $x\exp(x^{-1})$ & $-x\log(x)$ \\
\bottomrule
\end{tabularx}
\end{table}

The expression \eqref{eq:1} gives rise to two computational problems:
\begin{enumerate}[label=P\arabic*., ref=P\arabic*, leftmargin=23pt]
\item computing $Af(A^{-1}B)$, where $A$ and $B$ are of moderate size; and\label{it:a}
\item computing $Af(A^{-1}B)v$, where $A$ and $B$ are large and sparse, and $v$ is a vector.\label{it:b}
\end{enumerate}
It is highly desirable to avoid, in~\ref{it:a}, the computation of $A^{-1}B$, and, in~\ref{it:b}, the computation of $f(A^{-1}B)$, which is dense, in general, even when $A$ and $B$ are sparse.
Our analysis will focus on \ref{it:a}.

The aforementioned Kubo--Ando means of two matrices are a straightforward application of \ref{it:a}.
Computing the weighted geometric mean (Karcher mean) of three or more matrices~\cite{bi13,ip18,q03} or its derivative \cite{1}, requires the evaluation of $A\exp(A^{-1}B)$ and $A\log(A^{-1}B)$, both of which are of the form~\eqref{eq:1}. Functions of the type $Af(A^{-1}B)$ arise in the definition of the matrix geometric mean~\cite{q01,tsm21,q04,q05}, used in recent literature to illustrate the behavior of Riemannian optimization algorithms.

Applications of~\ref{it:b} can be found in domain decomposition methods, both in established~\cite{alk,al1,al2} and in more recent~\cite{d1,hkmw25} work, in techniques to cluster signed networks~\cite{fi,mth1}, where one needs to compute the action of the weighted geometric mean of two matrices on a vector.

The related problem $f(A^{-1}B)v$ is considered in finite element methods for reaction--diffusion equations~\cite{bak}, where $f$ is a fractional power, and again in clustering of signed networks~\cite{ytfi,ytml}, where
$f(z)=\bigl(1+a_0 z^{\alpha/2}\bigr)^{-1}$ or $f(z) = E_{\gamma,1}(a_1 z^{\alpha/2})$.
Here, $E_{\gamma,1}$ is the \mbox{Mittag-Leffler} function and $a_0$ and $a_1$ are constants.
In these problems, using a Krylov subspace method typically yields the computation of $Ag(A^{-1}B)$, for some function $g$, where $A$ and $B$ have small or moderate size.

After revising some background material in \cref{sec:preliminaries}, in \cref{sec:conditioning} we discuss the conditioning of the problem.
In \cref{sec:chsh-alg}, we describe several numerical algorithms for the evaluation of~\eqref{eq:1}, studying their numerical stability in floating-point arithmetic.
We compare these algorithms, in terms of accuracy and performance, in \cref{sec:num-exp}.
\Cref{sec:conclusions} summarizes possible directions for future work.

\section{Preliminaries}
\label{sec:preliminaries}

Given $A \in \Cmn$ and $B \in \Cpq$,
we denote by {\color{revcol}$A^T$ and $A^*$ the transpose and conjugate transpose of $A$, respectively, by} $\opvec(A)$ the vector of length $mn$ that stacks the columns of $A$ from left to right, and by $A \otimes B$ the Kronecker product of $A$ and $B$. We will use the property $\opvec(AXB)=(B^T\otimes A)\opvec(X)$, which holds whenever the product $AXB$ is defined.

For a complex function $f$, we define the divided differences
\[
    f[\lambda_i,\lambda_j]\deq{}
    \begin{cases}
    \dfrac{f(\lambda_j)-f(\lambda_i)}{\lambda_j-\lambda_i}, & \mbox{ if }\lambda_i\ne \lambda_j,\\
    f'(\lambda_i), & \mbox{ if }\lambda_i=\lambda_j.
    \end{cases}
\]
{\color{revcol} Note that with the divided differences notation equation \eqref{eq:15} for the $n=2$ case can be written as
$Af(A^{-1}B) = \wh f[\lambda_1,\lambda_2]A+f[\lambda_1,\lambda_2]B$.}

A linear map $L:\Cmn \rightarrow \Cpq$ can be represented by a matrix $M\in\mathbb C^{(pq)\times (mn)}$ such that $\opvec(L[H])=M\opvec(H)$ for any $H\in\Cmn.$
$M$ is the Kronecker form of the map $L$ and represents $L$ in the vec basis.
This form will be used for the Fréchet derivatives of primary matrix functions.

A primary matrix function obtained from the complex function $f$ is continuously differentiable at $A\in\Cnn$ if it is sufficiently smooth, for instance analytic, at the eigenvalues of $A$; see \cite[Thm.~3.8]{higham:book08} for the precise statement. We denote by $\D{f}{A} : \Cnn \rightarrow \Cnn$ the Fréchet derivative of $f$ at $A$. Thus $\Df{f}{A}{H}$ is the directional derivative of $f$ at $A$ in the direction $H \in \Cnn$, namely
\begin{equation}\label{eq:10}
\Df{f}{A}{H} = \lim_{\varepsilon\rightarrow 0} \frac{1}{\varepsilon}\bigl(f(A+\varepsilon H)-f(A)\bigr).
\end{equation}
Equation~\eqref{eq:10} implies the commutativity with similarities, that is,
\begin{equation}\label{eq:8}
    M^{-1}\Df{f}{A}{H}M=\Df{f}{M^{-1}AM}{M^{-1}HM},
\end{equation}
for $f$ differentiable at $A$ and $M$ nonsingular. We also recall the Daleckiĭ--Kreĭn theorem \cite[Thm.~3.11]{higham:book08},
\begin{equation}\label{eq:9}
    \Df{f}{\diag(\lambda_1,\ldots,\lambda_n)}{H} = F\circ H,\qquad F_{ij}=f[\lambda_i,\lambda_j],
\end{equation}
where $A\circ B \in \Cmn$ denotes the component-wise (Hadamard) product of $A, B \in \Cmn$.

Throughout this work, we denote the Frobenius norm and the 2-norm by $\| \cdot \|_F$ and $\| \cdot \|$, respectively, and we use the 2-norm condition number $\cond(A) = \|A\| \|A^{-1}\|$.
The unit roundoff of the working floating-point arithmetic will be denoted by $u$, and dotted relations only hold to the first order, ignoring higher terms in $u$. For example, $\doteq$ and $\ledot$ denote first-order equivalence and inequality, respectively.

\section{Conditioning}
\label{sec:conditioning}

If $f$ is differentiable in a neighborhood of $A^{-1}B$, then the function $\varphi(A,B)=Af(A^{-1}B)$ is differentiable in a neighborhood of $(A,B)$. We can measure the sensitivity of $\varphi(A,B)$ with respect to perturbations using the relative Frobenius-norm condition number \cite{rice} of $\varphi(A,B)$, which is
\begin{equation}\label{eq:4}
    \hbox{cond}(\varphi;A,B)\deq{}\frac{\|\D{\varphi}{A,B}\|_F\|[A\ B]\|_F}{\|\varphi(A,B)\|_F},
\end{equation}
where $\D{\varphi}{A,B}$ is the Fréchet derivative of $\varphi$ at $(A,B)$ and
\[
    \|D\varphi(A,B)\|_F\deq{}\max_{[H\ L]\ne 0}\frac{\|\Df{\varphi}{A,B}{H,L}\|_F}{\|[H\ L]\|_F}.
\]

The following result provides an explicit expression for the Fréchet derivative of $\varphi$, whereby an explicit expression for the condition number can be obtained.
This will allow us to derive useful bounds.

\begin{lemma}\label{thm:1}
Let $A,B\in\Cnn$, with $A$ nonsingular, and let $f$ be a function differentiable at $A^{-1}B$. Then the Fréchet derivative of $\varphi(A,B) \deq Af(A^{-1}B)$ in the direction $[H, L]$, with $H,L\in\Cnn$, is
\begin{equation}\label{eq:5}
    \Df{\varphi}{A,B}{H,L} = Hf(A^{-1}B)+A\Df{f}{A^{-1}B}{-A^{-1}HA^{-1}B+A^{-1}L}.
\end{equation}
If $B$ is nonsingular, then the function $\widehat f(x)\deq{}xf(x^{-1})$ is differentiable at $B^{-1}A$ and
\begin{equation}\label{eq:6}
    \Df{\varphi}{A,B}{H,L} = B\Df{\widehat f}{B^{-1}A}{B^{-1}H}+A\Df{f}{A^{-1}B}{A^{-1}L}.
\end{equation}
\end{lemma}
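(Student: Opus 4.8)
The plan is to compute the Fréchet derivative of $\varphi(A,B) = Af(A^{-1}B)$ directly from its definition as a product, using the product rule together with the chain rule for the Fréchet derivative of the primary matrix function $f$. Write $G(A,B) \deq A^{-1}B$; then $\varphi = A\cdot f(G(A,B))$. The product rule gives
\begin{equation*}
    D\varphi(A,B)[H,L] = H\,f(G(A,B)) + A\,Df(G(A,B))\bigl[DG(A,B)[H,L]\bigr],
\end{equation*}
so the task reduces to computing $DG(A,B)[H,L]$, the Fréchet derivative of $(A,B)\mapsto A^{-1}B$. For this I would use the standard fact that the derivative of matrix inversion is $D(X\mapsto X^{-1})(A)[H] = -A^{-1}HA^{-1}$, combine it with the product rule for $A^{-1}\cdot B$, and obtain $DG(A,B)[H,L] = -A^{-1}HA^{-1}B + A^{-1}L$. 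Substituting this back yields \eqref{eq:5}.

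For \eqref{eq:6}, the idea is to exploit the duality identity already recorded in the excerpt: when $B$ is invertible, $\widehat f(x) = xf(x^{-1})$ is differentiable at $B^{-1}A$ and $Af(A^{-1}B) = B\widehat f(B^{-1}A)$. Hence $\varphi(A,B) = B\widehat f(B^{-1}A)$ as well, and applying the formula \eqref{eq:5} we just derived — but with the roles of $A$ and $B$ swapped and $f$ replaced by $\widehat f$, so that the pair perturbed is now $(B,A)$ in direction $[L\ H]$ — gives
\begin{equation*}
    D\varphi(A,B)[H,L] = L\,\widehat f(B^{-1}A) + B\,D\widehat f(B^{-1}A)\bigl[-B^{-1}LB^{-1}A + B^{-1}H\bigr].
\end{equation*}
To massage this into the symmetric split \eqref{eq:6}, I would expand using linearity of $D\widehat f$ into the $H$-part, $B\,D\widehat f(B^{-1}A)[B^{-1}H]$, and the $L$-part, $L\,\widehat f(B^{-1}A) - B\,D\widehat f(B^{-1}A)[B^{-1}LB^{-1}A]$. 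The former is exactly the first term of \eqref{eq:6}. It then remains to show that the $L$-part equals $A\,Df(A^{-1}B)[A^{-1}L]$. I expect to verify this either by applying \eqref{eq:5} a second time (now to $\varphi = Af(A^{-1}B)$ in the pure-$L$ direction $[0\ L]$, which gives $ADf(A^{-1}B)[A^{-1}L]$ directly) and invoking uniqueness of the Fréchet derivative to match the two expressions, or by a direct computation relating $D\widehat f$ at $B^{-1}A$ to $Df$ at $A^{-1}B$ through $\widehat f(x) = xf(x^{-1})$ and the substitution $x\mapsto x^{-1}$.

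The main obstacle is the bookkeeping in the second formula: one must be careful that differentiating $\widehat f(B^{-1}A)$ treats $\widehat f$ as a \emph{primary matrix function} (so the chain rule $D(\widehat f\circ G)(\cdot)[\cdot] = D\widehat f(G(\cdot))[DG(\cdot)[\cdot]]$ applies), and that the identity $Af(A^{-1}B) = B\widehat f(B^{-1}A)$ is being used as an identity of functions of $(A,B)$ on an open set, so that both sides have the same Fréchet derivative. The cleanest route, and the one I would actually write, is to prove \eqref{eq:5} carefully and then obtain \eqref{eq:6} as a corollary by (i) specializing \eqref{eq:5} to the directions $[H\ 0]$ and $[0\ L]$ separately, (ii) applying \eqref{eq:5} to the dual representation to rewrite the $H$-only piece in terms of $\widehat f$, and (iii) adding the two pieces back by linearity; this sidesteps any delicate direct manipulation of divided differences of $\widehat f$ versus $f$.
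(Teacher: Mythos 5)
Your proposal is correct and follows essentially the same route as the paper: the product rule plus the derivative of $(A,B)\mapsto A^{-1}B$ gives \eqref{eq:5}, and \eqref{eq:6} is obtained by splitting into the directions $[H\ 0]$ and $[0\ L]$, using the dual representation $\varphi(A,B)=B\widehat f(B^{-1}A)$ for the $H$-part and \eqref{eq:5} for the $L$-part. The only detail worth adding explicitly is the one-line justification that $\widehat f$ is differentiable at $B^{-1}A$ (the eigenvalues of $B^{-1}A$ are the reciprocals of those of $A^{-1}B$), since this is part of the statement rather than a given.
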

\begin{proof} Let $\zeta(A,B)\deq{}A^{-1}B$. Using the chain rule and the product rule yields
\begin{equation}\label{eq:3}
\begin{split}
    \Df{\varphi}{A,B}{H,L}
    & =
    Hf(A^{-1}B)+A\Df{f}{A^{-1}B}{\Df{\zeta}{A,B}{H,L}}\\
    & =
    Hf(A^{-1}B)+A\Df{f}{A^{-1}B}{-A^{-1}HA^{-1}B+A^{-1}L}.
\end{split}
\end{equation}
If $B$ is nonsingular, then the eigenvalues of $B^{-1}A$ are the reciprocals of those of $A^{-1}B$. Thus, $\widehat f$~is differentiable at $B^{-1}A$, and applying~\eqref{eq:3} to the function $\psi(B,A) \deq{} B\wh f(B^{-1}A)=\varphi(A,B)$,
we obtain
\begin{equation}\label{eq:7}
    \Df{\varphi}{A,B}{H,L} = \Df{\psi}{B,A}{L,H} = L\widehat f(B^{-1}A)+
    B\Df{\widehat f}{B^{-1}A}{-B^{-1}LB^{-1}A+B^{-1}H}.
\end{equation}
Finally,
\[
\Df{\varphi}{A,B}{H,L}
= \Df{\varphi}{A,B}{H,0}+\Df{\varphi}{A,B}{0,L}
= B\Df{\widehat f}{B^{-1}A}{B^{-1}H} + A\Df{f}{A^{-1}B}{A^{-1}L},
\]
where we have used \eqref{eq:7} for the first summand and \eqref{eq:3} for the second.
\end{proof}

If we work in the vec basis and assume that $A$ and $B$ are positive definite, then  a more explicit expression can be given for the derivative of $\varphi$.

\begin{lemma}\label{thm:2}
In the notation of Lemma \ref{thm:1}, let $A$ and $B$ be positive definite, and let $Z_1$ and $Z_2$ be such that $Z_1^{-1}A^{-1}BZ_1=\diag(\lambda_1,\ldots,\lambda_n)\deqr{}\Lambda$ and $Z_2^{-1}B^{-1}AZ_2=\diag(1/\lambda_1,\ldots,1/\lambda_n)=\Lambda^{-1}$. Then,
\[
    \opvec(\Df{\varphi}{A,B}{H,L})=M_1\opvec(L)+M_2\opvec(H),
\]
where $M_p=(Z_p^{-T}\otimes AZ_p)\diag(\opvec(F^{(p)}))(Z_p^T\otimes (AZ_p)^{-1})$, for $p=1,2$, with $F^{(1)}_{ij}=f[\lambda_i,\lambda_j]$ and \mbox{$F^{(2)}_{ij} = \wh f[1/\lambda_i,1/\lambda_j]$}, for $i,j=1,\ldots,n$.
In particular, with
\begin{align*}
\wh Z_1&=A^{-1/2}Q_1,\qquad Q_1^*A^{-1/2}BA^{-1/2}Q_1=\Lambda,\\
\wh Z_2&=B^{-1/2}Q_2,\qquad Q_2^*B^{-1/2}AB^{-1/2}Q_2=\Lambda^{-1},
\end{align*}
where $Q_1$ and $Q_2$ are unitary, we obtain
\begin{equation}\label{eq:13}
    M_p = (\wh Z_p^{-T}\otimes \wh Z_p^{-*})\diag(\opvec(F^{(p)}))
    (\wh Z_p^T \otimes \wh Z_p^*),\qquad p=1,2.
\end{equation}
\end{lemma}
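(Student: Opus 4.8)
The plan is to take the second representation \eqref{eq:6} of \cref{thm:1}, namely $D\varphi(A,B)[H,L]=BD\wh f(B^{-1}A)[B^{-1}H]+ADf(A^{-1}B)[A^{-1}L]$, and to vectorize the two summands separately, so that the block acting on $\opvec(H)$ becomes $M_2$ and the one acting on $\opvec(L)$ becomes $M_1$. Positive definiteness of $A$ and $B$ enters only to guarantee that $A^{-1}B$ (similar to the Hermitian matrix $A^{-1/2}BA^{-1/2}$) and $B^{-1}A$ are diagonalizable with positive eigenvalues, the latter being the reciprocals $1/\lambda_i$ of the former, and that the Hermitian square roots $A^{\pm1/2}$ and $B^{\pm1/2}$ exist; this makes $Z_1,Z_2$, the unitary $Q_1,Q_2$, $\Lambda^{-1}$, and $\wh f[1/\lambda_i,1/\lambda_j]$ all meaningful.

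For the summand $ADf(A^{-1}B)[A^{-1}L]$ I would first put the Fréchet derivative in diagonalized form. Combining the similarity-invariance \eqref{eq:8}, with transforming matrix $Z_1$, with the Daleckiĭ--Kreĭn formula \eqref{eq:9} gives $Df(A^{-1}B)[K]=Z_1\bigl(F\circ(Z_1^{-1}KZ_1)\bigr)Z_1^{-1}$, where $F_{ij}=f[\lambda_i,\lambda_j]$. Setting $K=A^{-1}L$, premultiplying by $A$, and applying $\opvec$ together with $\opvec(XYW)=(W^T\otimes X)\opvec(Y)$ and $\opvec(F\circ Y)=\diag(\opvec F)\opvec(Y)$, the Kronecker factors collect exactly into $M_1\opvec(L)$ with $\Delta^{(1)}=\diag(\opvec F)$. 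The summand $BD\wh f(B^{-1}A)[B^{-1}H]$ is handled in the same way: \eqref{eq:8} with transforming matrix $Z_2$ (which diagonalizes $B^{-1}A$ to $\Lambda^{-1}$) and \eqref{eq:9} applied to $\wh f$ at $\Lambda^{-1}$ give $D\wh f(B^{-1}A)[K]=Z_2\bigl(\wh F\circ(Z_2^{-1}KZ_2)\bigr)Z_2^{-1}$ with $\wh F_{ij}=\wh f[1/\lambda_i,1/\lambda_j]$; taking $K=B^{-1}H$, premultiplying by $B$, and vectorizing produces $M_2\opvec(H)$. Since $\opvec([H\ L])$ stacks $\opvec(H)$ over $\opvec(L)$, adding the two contributions gives the claimed identity.

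One small point to flag: $\diag(\opvec F)$ places $F_{ij}$ in the entry indexed by $(j-1)n+i$, whereas the statement lists $f[\lambda_i,\lambda_j]$ in entry $(i-1)n+j$; the two agree because divided differences are symmetric, $f[\lambda_i,\lambda_j]=f[\lambda_j,\lambda_i]$, so the ordering convention is immaterial and I would merely remark on this rather than track indices. For the reduced form \eqref{eq:13} I would specialize to $Z_1=\wh Z_1=A^{-1/2}Q_1$, which is admissible because $Q_1$ unitary gives $\wh Z_1^{-1}=Q_1^*A^{1/2}$ and hence $\wh Z_1^{-1}A^{-1}B\wh Z_1=Q_1^*A^{-1/2}BA^{-1/2}Q_1=\Lambda$. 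The key simplification is $A\wh Z_1=A^{1/2}Q_1=\wh Z_1^{-*}$, so that $(A\wh Z_1)^{-1}=\wh Z_1^{*}$, and substituting this into the formula for $M_1$ collapses it to \eqref{eq:13} with $p=1$; the case $p=2$ is identical, with $\wh Z_2=B^{-1/2}Q_2$ and $B\wh Z_2=B^{1/2}Q_2=\wh Z_2^{-*}$.

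I do not expect a genuine obstacle here: once \cref{thm:1} and the standard facts \eqref{eq:8}--\eqref{eq:9} are in hand, the argument is largely a bookkeeping exercise in the vec/Kronecker calculus. The only places that call for care are keeping the left and right Kronecker factors in the correct order when vectorizing the sandwiched products $A\,Z_1(\cdot)Z_1^{-1}$ and $B\,Z_2(\cdot)Z_2^{-1}$, and the symmetry remark about $\Delta^{(1)}$ and $\Delta^{(2)}$ noted above.
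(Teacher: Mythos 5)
Your proposal is correct and follows essentially the same route as the paper: vectorize the two summands of \eqref{eq:6} separately, diagonalize each Fréchet derivative via \eqref{eq:8}, apply the Daleckiĭ--Kreĭn formula \eqref{eq:9}, and collect Kronecker factors, then obtain \eqref{eq:13} from the observations $A\wh Z_1=\wh Z_1^{-*}$ and $B\wh Z_2=\wh Z_2^{-*}$. Your remark on the indexing of $\diag(\opvec(F))$ versus the stated entries of $\Delta^{(1)}$, resolved by the symmetry $f[\lambda_i,\lambda_j]=f[\lambda_j,\lambda_i]$, is a valid point that the paper leaves implicit.
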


\begin{proof}
The idea of the proof is to take the vec of \eqref{eq:6} and manipulate its right-hand side, starting from $A\Df{f}{A^{-1}B}{A^{-1}L}$.
By \eqref{eq:8}, we get
\begin{equation}\label{eq:11}
A\Df{f}{A^{-1}B}{A^{-1}L}
= AZ_1\Df{f}{Z_1^{-1}A^{-1}BZ_1}{Z_1^{-1}A^{-1}LZ_1}Z_1^{-1}
= AZ_1\Df{f}{\Lambda}{(AZ_1)^{-1}LZ_1}Z_1^{-1},
\end{equation}
and the Daleckiĭ--Kreĭn theorem \eqref{eq:9} implies that
\[
    \Df{f}{\Lambda}{(AZ_1)^{-1}LZ_1}
    = F^{(1)}
    \circ \bigl((AZ_1)^{-1}LZ_1\bigr),
\]
Using the properties of the vec operator, we can write
\begin{equation}\label{eq:12}
\begin{split}
\opvec(A\Df{f}{A^{-1}B}{A^{-1}L})
& = (Z_1^{-T}\otimes (AZ_1))\opvec\bigl(F^{(1)}\circ((AZ_1)^{-1}LZ_1)\bigr)\\
& = (Z_1^{-T}\otimes (AZ_1))\diag(\opvec(F^{(1)}))(Z_1^T\otimes (AZ_1)^{-1})\opvec(L)\\
&  = M_1 \opvec(L).
\end{split}
\end{equation}
Repeating the argument for $\opvec\bigl(B\Df{\widehat f}{B^{-1}A}{B^{-1}H}\bigr)$ and using Lemma \ref{thm:1} completes the first part of the proof.
For the second part, we use the first part and observe that: if $\wh Z_1\deq{}A^{-1/2}Q_1$, then $\wh Z_1$ diagonalizes $A^{-1}B$ and $A\wh Z_1=\wh Z_1^{-*}$; and if $\wh Z_2\deq{}B^{-1/2}Q_2$, then $\wh Z_2$ diagonalizes $B^{-1}A$ and $B\wh Z_2=\wh Z_2^{-*}$.
\end{proof}

In order to bound the condition number, we need a technical lemma. \begin{lemma}\label{thm:3}
In the notation of Lemma \ref{thm:2}, the following bounds hold
\[
\|M_1\|\le \mu(A)\max_{i,j=1,\ldots,n}|f[\lambda_i,\lambda_j]|,\qquad \|M_2\|\le \mu(B)\max_{i,j=1,\ldots,n}|\widehat f[1/\lambda_i,1/\lambda_j]|.
\]
\end{lemma}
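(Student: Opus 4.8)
The statement is a direct consequence of the factorized form \eqref{eq:13} obtained in \cref{thm:2}, so the proof should be short and rely only on elementary properties of the spectral norm under Kronecker products and transposition. The plan is to treat $M_1$ in detail and then obtain the bound on $M_2$ by the symmetry $A \leftrightarrow B$, $f \leftrightarrow \widehat f$, $\Lambda \leftrightarrow \Lambda^{-1}$ already built into \cref{thm:2}.

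First I would start from $M_1 = (\wh Z_1^{-T}\otimes \wh Z_1^{-*})\Delta^{(1)}(\wh Z_1^T \otimes \wh Z_1^*)$ and apply submultiplicativity of $\|\cdot\|$ to get $\|M_1\| \le \|\wh Z_1^{-T}\otimes \wh Z_1^{-*}\|\,\|\Delta^{(1)}\|\,\|\wh Z_1^T \otimes \wh Z_1^*\|$. Then I would invoke three standard facts: (i) for the spectral norm $\|X\otimes Y\| = \|X\|\,\|Y\|$, since the singular values of $X\otimes Y$ are the pairwise products of those of $X$ and $Y$; (ii) $\|X^T\| = \|X^*\| = \|X\|$, since transposition and conjugate transposition leave the singular values unchanged; and (iii) $\|\Delta^{(1)}\| = \max_{i,j}|f[\lambda_i,\lambda_j]|$, because $\Delta^{(1)}$ is diagonal with exactly these entries on the diagonal. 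Combining (i) and (ii) gives $\|\wh Z_1^{-T}\otimes \wh Z_1^{-*}\| = \|\wh Z_1^{-1}\|^2$ and $\|\wh Z_1^T\otimes \wh Z_1^*\| = \|\wh Z_1\|^2$, so $\|M_1\| \le \cond(\wh Z_1)^2 \max_{i,j}|f[\lambda_i,\lambda_j]|$, writing $\cond$ for $\|\cdot\|\|(\cdot)^{-1}\|$.

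It then remains to evaluate $\cond(\wh Z_1)$. Since $\wh Z_1 = A^{-1/2}Q_1$ with $Q_1$ unitary, we have $\|\wh Z_1\| = \|A^{-1/2}\|$ and $\|\wh Z_1^{-1}\| = \|Q_1^* A^{1/2}\| = \|A^{1/2}\|$; because $A$ is Hermitian positive definite, $\|A^{1/2}\| = \|A\|^{1/2}$ and $\|A^{-1/2}\| = \|A^{-1}\|^{1/2}$, whence $\cond(\wh Z_1)^2 = \|A\|\,\|A^{-1}\| = \mu(A)$. This yields $\|M_1\| \le \mu(A)\max_{i,j}|f[\lambda_i,\lambda_j]|$. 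Repeating the argument verbatim with $\wh Z_2 = B^{-1/2}Q_2$, $Q_2$ unitary, and $\Delta^{(2)}$ in place of $\Delta^{(1)}$, whose diagonal entries are the $\widehat f[1/\lambda_i,1/\lambda_j]$, gives $\|M_2\| \le \mu(B)\max_{i,j}|\widehat f[1/\lambda_i,1/\lambda_j]|$.

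I do not anticipate any real obstacle here: the only points requiring a word of care are the multiplicativity of the spectral norm over Kronecker products and the identity $\|A^{\pm 1/2}\| = \|A\|^{\pm 1/2}$ for positive definite $A$, both of which are classical. The essential work has already been done in establishing the unitarily-normalized factorization \eqref{eq:13} in \cref{thm:2}.
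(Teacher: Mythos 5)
Your proof is correct and follows essentially the same route as the paper: bound $\|M_1\|$ via \eqref{eq:13} by $\mu(\wh Z_1^T\otimes \wh Z_1^*)\,\|\Delta^{(1)}\|$, use multiplicativity of the spectral norm over Kronecker products and its invariance under transposition to reduce to $\mu(A^{1/2})^2=\mu(A)$, note that the norm of the diagonal factor is the maximum modulus of its entries, and repeat symmetrically for $M_2$. Your version is in fact slightly more careful in the bookkeeping, correctly recording $\|\wh Z_1\|=\|A^{-1/2}\|$ and $\|\wh Z_1^{-1}\|=\|A^{1/2}\|$, where the paper's intermediate identity has the exponents swapped (harmlessly, since only the product $\mu(\wh Z_1)^2=\mu(A)$ enters the bound).
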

\begin{proof}
Using equation \eqref{eq:13}, we obtain
$
    \|M_p\|\le \mu(\wh Z_p^T \otimes \wh Z_p^*)\|\diag(\opvec(F^{(p)}))
    \|,
$
for $p=1,2$.
Therefore,
\[
    \|\wh Z_1^T\otimes \wh Z_1^*\|=\|\wh Z_1^T\|\|\wh Z_1^*\|=\|\wh Z_1\|^2=
    {\color{revcol} \|A^{-1/2}\|^2 =\|A^{-1}\|},
\]
and analogous identities hold for $\|\wh Z_1^{-T}\otimes \wh Z_1^{-*}\|$, $\|\wh Z_2^T\otimes \wh Z_2^*\|$, and $\|\wh Z_2^{-T}\otimes \wh Z_2^{-*}\|$. Thus,
$
\mu(\wh Z_1^T\otimes \wh Z_1^*)=\mu(A)
$
and
$
\mu(\wh Z_2^T\otimes \wh Z_2^*)=\mu(B).
$
Noting that the 2-norm of a diagonal matrix is the maximum modulus of its diagonal elements concludes the proof.
\end{proof}

Using Lemma~\ref{thm:3}, we can easily bound the Frobenius norm of the Fréchet derivative of $\varphi$, since
\[
\begin{split}
    \|D\varphi(A,B)\|_F & = \|[M_2\ M_1]\|\le\sqrt{\|M_1\|^2+\|M_2\|^2}\\
    & \le \sqrt{\mu(A)^2\max_{i,j=1,\ldots,n}|f[\lambda_i,\lambda_j]|^2
    +\mu(B)^2\max_{i,j=1,\ldots,n}|\wh f[1/\lambda_i,1/\lambda_j]|^2}.
    \end{split}
\]
Similar bounds involving only $\mu(A)$ or $\mu(B)$ can be obtained---an example is
\begin{equation}\label{eq:14}
    \|\D{\varphi}{A,B}\|_F \le
    \mu(A)\sqrt{\max_{i,j=1,\ldots,n }|f[\lambda_i,\lambda_j]|^2+\max_{i,j=1,\ldots,n} |f(\lambda_j)-f[\lambda_i,\lambda_j]\lambda_j|^2}.
\end{equation}
If $f$ and $\wh f$ are convex and monotone, then the bounds simplify, since
\begin{equation*}
\max |f[\lambda_i,\lambda_j]|=f'(\lambda_M)
\text{ and }
\max |\wh f[1/\lambda_i,1/\lambda_j]|={\wh f}'(1/\lambda_m),
\end{equation*}
where $\lambda_m$ and $\lambda_M$ are the smallest and the largest eigenvalues of $A^{-1}B$, respectively.
For instance, using \eqref{eq:14} with $f(x)=\sqrt{x}$ yields the bound \cite[eq.~8]{bi24} on the condition number of the matrix geometric mean.

\section{The Cholesky--Schur algorithm}\label{sec:chsh-alg}
\begin{algorithm2e}[t]
\caption{Square root-based algorithm for computing $Af(A^{-1}B)$.}
\label{alg:1}
\KwIn{A positive definite matrix $A$, a Hermitian matrix $B$, and a real-valued function $f$.}
\KwOut{The Hermitian matrix $S_5 = Af(A^{-1}B)$.}
$S_1\deq A^{1/2}$\;\label{ln:ss-1}
$S_2\deq S_1^{-1}$\;\label{ln:ss-2}
$S_3\deq S_2BS_2^*$\;\label{ln:ss-3}
\altline{\emph{Faster variant:} Replace Lines \ref{ln:ss-2} and \ref{ln:ss-3} with $S_1\deqr R^*R$; $S_3=\smash{R^{-1}(R^{-*}BR^{-1})R^{-*}}$.\;} \label{ln:ss-3a}
$S_3\deqr Q\Lambda Q^*$\quad (Schur decomposition)\;\label{ln:ss-4}
$S_4\deq Qf(\Lambda )Q^*$\;\label{ln:ss-5}
$S_5\deq S_1S_4S_1^*$\;\label{ln:ss-6}
\Return{$S_5$}\;\label{ln:ss-7}
\end{algorithm2e}

A naive approach to compute $F=Af(A^{-1}B)$ exploits~\eqref{eq:2}.
The pseudocode is given in \Cref{alg:1}. Besides the straightforward version, the pseudocode reports
a variant of the algorithm---here and below, alternative lines are indicated with underlined line numbers. This procedure requires the computation of a matrix square root, a matrix inversion, five matrix multiplications, three of which produce a
Hermitian matrix, and the Schur factorization of a Hermitian matrix.
Observe that $S_1$ and $S_2$ in \cref{alg:1} are Hermitian and $S_1^*$ and $S_2^*$ could be replaced by $S_1$ and $S_2$, respectively.

We can use an asymptotic analysis to estimate the number of floating-point operations (flops) required by \cref{alg:1}. \Cref{tab:cost} in the Appendix reports the dominant $n^3$ term in the flop count of the matrix operations used.
Computing $A^{1/2}$ as $U \Lambda ^{1/2} U^*$, where $A=U \Lambda U^*$ is a Schur decomposition of $A$, requires $10n^3$ flops. Inverting the Hermitian $S_1$ on Line \ref{ln:ss-2} requires $n^3$ flops.
Lines \ref{ln:ss-3} and \ref{ln:ss-6} are matrix multiplications with Hermitian results and require $2n^3+n^3$ flops each. The Schur decomposition on Line \ref{ln:ss-4} requires $9n^3$ flops, and the cost of Line \ref{ln:ss-5} is dominated by a matrix multiplication producing a Hermitian result, which costs $n^3$ flops. Thus \cref{alg:1} asymptotically requires $27n^3$ flops.

This cost can be reduced by using the Cholesky factorization $S_1 \deqr{} R^*R$, where $R$ is an upper triangular matrix with positive diagonal entries.
Computing $S_3$ as $R^{-1}KR^{-*}$, where $K=R^{-*}BR^{-1}$, only requires $3n^3$ flops once $S_1$ is available, thus this variant of \cref{alg:1} requires $26n^3$ flops in total.

A further reduction can be obtained  if $S_5$ is computed as $S_tf(\Lambda) S_t^*$, where $S_t = S_1 Q$, without forming $S_4$. In this case, the overall cost is reduced to $25n^3$ flops.
Compared with \cref{alg:2} below, however, these modifications have only a minor effect on the computational cost of the algorithm.

{\color{revcol} \Cref{alg:ref} describes an alternative approach.
This version requires two Schur decompositions and four matrix multiplications, two of which (on Line \ref{ln:r-4} and
\ref{ln:r-7}) produce a Hermitian result. From \cref{tab:cost}, we get an overall cost of $24n^3$. }

{ \begin{algorithm2e}[t] \color{revcol}
\caption{Algorithm for computing $Af(A^{-1}B)$ using two Schur decompositions.}\label{alg:ref}

\KwIn{A positive definite matrix $A$, a Hermitian matrix $B$, and a real-valued function $f$.}
\KwOut{The Hermitian matrix $S_5 = Af(A^{-1}B)$.}
$A\deqr U\Lambda U^*$ \quad (Schur decomposition) \;\label{ln:r-1}
$S_1 = \Lambda^{1/2}$\;\label{ln:r-2}
$S_2=US_1^{-1}$\;\label{ln:r-3}
$S_3\deq S_2^*B S_2$\;\label{ln:r-4}
$S_3\deqr V\Lambda_1 V^*$\quad (Schur decomposition)\;\label{ln:r-5}
$S_4\deq U\Lambda_1V$\;\label{ln:r-6}
$S_5=S_4 f(\Lambda_1)S_4^*$\;\label{ln:r-7}
\Return{$S_5$}\;
\end{algorithm2e}
}

A more effective approach relies on the Cholesky factorization $A \deqr{} R_A^*R_A$.
This yields the Cholesky--Schur method in \cref{alg:2}.
The basic version of this algorithm is similar to \cref{alg:1} and
comprises Lines \ref{ln:sc-1}--\ref{ln:sc-3}, \ref{ln:sc-4}--\ref{ln:sc-6}, and \ref{ln:sc-8}. The first variant replaces Lines \ref{ln:sc-2} and \ref{ln:sc-3} by the faster Line \ref{ln:sc-3a}, which avoids inverting $R_A^*$ and requires the solution of two triangular systems with $n$ right-hand sides, one of which produces a Hermitian matrix. The second variant replaces Lines \ref{ln:sc-5} and \ref{ln:sc-6} by Line \ref{ln:sc-7}.

\begin{algorithm2e}[t]
\caption{Cholesky--Schur algorithm for computing $Af(A^{-1}B).$}
\label{alg:2}
\KwIn{A positive definite matrix $A$, a Hermitian matrix $B$, and a real-valued function $f$.}
\KwOut{The Hermitian matrix $S_5 = Af(A^{-1}B)$.}
$S_1\deq R_A^*$, where $A=:R_A^*R_A$ is the Cholesky decomposition of $A$.\; \label{ln:sc-1}
$S_2\deq S_1^{-1}$\; \label{ln:sc-2}
$S_3\deq S_2 B S_2^{*}$\; \label{ln:sc-3}
\altline{\emph{Faster variant:} Replace Lines \ref{ln:sc-2} and \ref{ln:sc-3} with $S_3\deq \smash{S_1^{-1}BS_1^{-*}}$.\;} \label{ln:sc-3a}
$S_3\deqr Q\Lambda Q^*$\quad (Schur decomposition)\;\label{ln:sc-4}
$S_4\deq Qf(\Lambda )Q^*$\;\label{ln:sc-5}
$S_5\deq S_1S_4S_1^* $\;\label{ln:sc-6}
\altline{\emph{Faster variant:} Replace Lines \ref{ln:sc-5}, \ref{ln:sc-6} with $S_t\deq S_1Q$, $S_5\deq S_tf(\Lambda)S_t^*$.\;}\label{ln:sc-7}
\Return{$S_5$}\;\label{ln:sc-8}
\end{algorithm2e}

This algorithm requires fewer flops because computing the Cholesky factorization of $A$ is cheaper than computing its square root.
Line \ref{ln:sc-1} requires $n^3/3$ flops, while the solution of the two linear systems on Line \ref{ln:sc-3a} requires $(1+1/3)n^3$ flops in total.
The two multiplications on Line \ref{ln:sc-6} similarly require $(1+1/3)n^3$ flops, while the dense, Hermitian matrix multiplication on Line \ref{ln:sc-5} costs $n^3$ flops. Assuming a cost of $9n^3$ flops for the Schur decomposition on Line~\ref{ln:sc-4}, \cref{alg:2} requires $13n^3$ flops, which is about half of the operations needed by \cref{alg:1}.
A slight reduction of $n^3/3$ flops can be achieved by
replacing Lines \ref{ln:sc-5} and \ref{ln:sc-6} by $S_t=S_1Q$ and $S_5=S_tf(\Lambda)S_t^*$,
respectively. This reduces cost of Lines \ref{ln:sc-5} and \ref{ln:sc-6} from
$(2 + 1/3)n^3$ to $2n^3$ flops, resulting in an overall cost of
$(12 + 2/3)n^3$ flops.

If $B$ is also positive definite, then the computational cost of \cref{alg:2} can be further reduced. This version, reported in \cref{alg:3}, swaps a matrix multiplication for a
Cholesky decomposition. In this variant, Lines \ref{ln:scpsd-1} and \ref{ln:scpsd-2a} require $2n^3/3$ and $n^3/3$ flops, respectively. The Schur decomposition on Line \ref{ln:scpsd-4}
requires $9n^3$ flops, while computing each of $S_4$ and $S_5$ requires
$n^3$ flops. Therefore, the overall cost of the faster variant of \cref{alg:3} is $(12 + 1/3)n^3$ flops, slightly lower than the $13n^3$ flops of \cref{alg:2}.
As for \cref{alg:2}, the overall computational cost of \cref{alg:3} can be reduced by modifying Lines \ref{ln:scpsd-6} and \ref{ln:scpsd-7}; this yields a computational cost of $12n^3$ flops.
These minor improvements complicate the rounding error analysis, thus we prefer to keep Lines \ref{ln:sc-5} and \ref{ln:sc-6} in \cref {alg:2} and Lines \ref{ln:scpsd-6} and \ref{ln:scpsd-7} in \cref{alg:3}.

\begin{algorithm2e}[t]
\caption{Improved Cholesky--Schur algorithm for computing $Af(A^{-1}B).$}
\label{alg:3}
\KwIn{Positive definite matrices $A$ and $B$, and a real-valued function $f$.}
\KwOut{The {\color{revcol} Hermitian}  matrix $S_5 = Af(A^{-1}B)$.}
Compute the Cholesky decompositions $A\deqr R_A^*R_A$ and $B\deqr R_B^*R_B$ of $A$ and $B$, respectively.\; \label{ln:scpsd-1}
$S_1\deq R_A^{-*}$\; \label{ln:scpsd-2}
$S_2\deq S_1R_B^*$\; \label{ln:scpsd-3}
\altline{\emph{Faster variant:} Replace Lines \ref{ln:scpsd-2}, \ref{ln:scpsd-3} with $S_2\deq R_A^{-*}R_B^*$\;} \label{ln:scpsd-2a}
$S_3\deq S_2S_2^*$\; \label{ln:scpsd-4}
$S_3\deqr Q\Lambda Q^*$\quad (Schur decomposition)\; \label{ln:scpsd-5}
$S_4\deq Qf(\Lambda)Q^*$\; \label{ln:scpsd-6}
$S_5\deq R_A^*S_4R_A $\; \label{ln:scpsd-7}
\altline{\emph{Faster variant:} Replace Lines \ref{ln:scpsd-6}, \ref{ln:scpsd-7} with $S_t\deq R_A^*Q$, $S_5\deq S_tf(\Lambda)S_t^*$\;}
\Return{$S_5$}\; \label{ln:scpsd-8}
\end{algorithm2e}

In the remainder of this section, we compare the numerical stability of \cref{alg:1,alg:2} by analyzing their behavior in floating-point arithmetic. The main differences between the two algorithms lie in the method used to compute the quantities $S_1$ and $S_3$. The analysis of the error in $S_3$ will play a crucial role in the final error bound as well as the backward error analysis of $S_1$. The next subsection recalls some preliminary results needed in the error analysis.

Hereafter, for simplicity, we assume that the matrices have real entries, but the analysis remains valid in the case of complex arithmetic.
Finally, we require that $f$ be twice differentiable with a continuous second derivative, so that $f(x+\epsilon)-f(x)\doteq \epsilon f'(x)$ and $f'(x+\epsilon)-f'(x)\doteq \epsilon f''(x)$.

\subsection{Background results}

We recall some standard results concerning rounding error analysis of basic matrix operations. We refer the reader to the monographs by Higham~\cite{higham:book02, higham:book08} for more background and context on these results.

By definition, $\| |A|\|_F=\|A\|_F$, and one can show that $\|A\|_F\le \sqrt n\|A\|$, that $\|\hbox{vec}(A)\|=\|A\|_F$, and that $\|I\otimes A\|=\|A\otimes I\|=\|A\|$.
If $C=AB$, then taking the 2-norm of both sides of the identity $\opvec(C)=(I\otimes A)\opvec(B)=(B^T\otimes I)\opvec(A)$ gives
\begin{equation}\label{eq:F2}
\|C\|_F\le\|A\|\|B\|_F \quad\text{and}\quad \|C\|_F\le\|A\|_F\|B\|.
\end{equation}

Let $X=A^{1/2}$ be the square root
of a positive definite matrix $A$,
and let $\wt X=\hbox{fl}(A^{1/2})$ be the matrix that the Schur algorithm computes in floating-point arithmetic. The error analysis on \cite[p.~139]{higham:book08} yields
\begin{equation}\label{eq:ersqrt}
    \wt X^2=A+\Delta,\quad \|\Delta\|_F\ledot u\, c\, n^3 \|\wt X\|_F^2\doteq ucn^3\|X\|_F^2=ucn^3\|A\|_F,
\end{equation}
where $c$ is a small positive constant and $u$ is the unit roundoff of the working precision. In other words, the computed matrix $\wt X$ is the exact square root of a slightly perturbed matrix $A+\Delta$.

For the Cholesky factorization $A=R_A^*R_A$ of a positive definite matrix $A$, a direct inspection shows that $Z\deq{}R_A^{-*}A^{1/2}$ and
$Z^{-1}=A^{-1/2}R_A^*$ are both unitary. Since $R_A^{-*}=ZA^{-1/2}$, this fact implies that
 \begin{equation}\label{eq:nrm}
 \|R_A^{-*}\|=\|A^{-1/2}\|,\quad
\|R_A^*\|=\|A^{1/2}\|, \quad
\cond(R_A^*)=\cond(A^{1/2}).
\end{equation}
Moreover, if we denote by $\wt R_A$
the matrix obtained by computing the Cholesky factorization in floating-point arithmetic, from \cite[Thm.~10.5]{higham:book02} we obtain

\begin{equation}\label{eq:chol0}
  \wt R_A^*\wt R_A= A+H,\quad |H|\ledot u(n+1)dd^*,\quad d_i=a_{ii}^{1/2}.
\end{equation}
Therefore, $\wt R_A$ is the exact Cholesky factor of a slightly perturbed matrix $A+H$.
Since \mbox{$\|A^{1/2}\|^2=\|A\|$}, $\|A^{1/2}\|_F^2\le n\|A\|$, and $\|dd^*\|_F=\|dd^*\|=d^*d=\hbox{trace}(A)=\|A^{1/2}\|_F^2$, by using \eqref{eq:chol0} we obtain
\begin{equation}\label{eq:cholD}
\|H\|_F=\|\wt R_A^*\wt R_A-A\|_F\ledot u(n+1)\|A^{1/2}\|_F^2\le u n (n+1)\|A\|.
\end{equation}

For the matrix multiplication $X=AB$, if we denote by $\wt X=\hbox{fl}(AB)$ the product computed in floating-point arithmetic, from \cite[eq.~(3.13)]{higham:book02} we have
\begin{equation}\label{eq:mm2}
|\wt X-X|\ledot u\,n |A|\cdot |B|,
\end{equation}
whence we get
$
\|\wt X-X\|_F\ledot u\,n\|A\|_F\|B\|_F\le u\,n^{3/2}\|A\|_*\|B\|_*$, where one of the two norms $\|\cdot\|_*$ is the Frobenius norm and the other is the $2$-norm.
A straightforward computation based on \eqref{eq:F2} yields

\begin{equation}\label{eq:mmE}
\|\hbox{fl}((AB)C)-ABC\|_F\ledot 2u\, n^\frac32\|A\|_*\|B\|_*\|C\|_*,
\end{equation}
where one of the three $\|\cdot\|_*$ above is $\|\cdot\|_F$ and the remaining two are the $2$-norm.

We can compute $T^{-1}B$, for $T$ triangular, by substitution. Let $\hbox{fl}(T^{-1}B)$ denote the value obtained by running the back-substitution algorithm in floating-point arithmetic. From \cite[Thm.~8.10]{higham:book02}, we have
\begin{equation}\label{eq:tinv}
\hbox{fl}(T^{-1}B)=T^{-1}B+E_1,\quad |E_1|\ledot u\,\sigma_n M(T)^{-1}|B|,\quad \sigma_n=n^2+n+1,
\end{equation}
where $M(T)$ is the M-matrix whose entries have the same moduli as the corresponding entries of $T$.
From \eqref{eq:F2}, we deduce that
$\|E_1\|_F\ledot u\,\sigma_n\|M(T)^{-1}\|\cdot \|B\|_F\le u\sigma_n\sqrt n\|M(T)^{-1}\| \cdot\|B\|$.
For $CT^{-*}$ we analogously obtain $\hbox{fl}(CT^{-*})=\hbox{fl}(T^{-1}C^*)^*=(T^{-1}C^*+E_2)^*$, where
$|E_2|\ledot u\,\sigma_nM(T)^{-1}|C^*|$.

Setting $C=T^{-1}B$, we can bound the error in the computation of $(T^{-1}B)T^{-*}=CT^{-*}$. We have
\[\begin{split}
&\hbox{fl}\bigl((T^{-1}B)T^{-*}\bigr)\doteq (T^{-1}B+E_1)T^{-*}+E_2^*
=T^{-1}BT^{-*}+E_1T^{-*}+E_2^*,\\
&|E_1|\ledot u\,\sigma_nM(T)^{-1}|B|,  \qquad
|E_2|\ledot u\,\sigma_n M(T)^{-1}|T^{-1}B|^*.
\end{split}
\]
In other words,
$
\hbox{fl}\bigl((T^{-1}B)T^{-*}\bigr)=T^{-1}BT^{-*}+E$, where
$E=E_1T^{-*}+E_2^*$.
Using \eqref{eq:F2}, we get
\begin{equation}\label{eq:tbt}
    \|\hbox{fl}\bigl((T^{-1}B)T^{-*}\bigr)-T^{-1}BT^{-*}\|_F \ledot
    2u\,\sigma_n\|T^{-1}\|\cdot \|M(T)^{-1}\|\cdot \|B\|_F.
\end{equation}

For $B=I$, equation \eqref{eq:tinv} becomes
\begin{equation}\label{eq:Tinv0}
|\hbox{fl}(T^{-1})-T^{-1}|\ledot u\,\sigma_n M(T)^{-1},
\end{equation}
and taking norms yields
\begin{equation}\label{eq:Tinv}
\|\hbox{fl}(T^{-1})-T^{-1}\|_F\ledot u\, \sigma_n\|M(T)^{-1}\|_F\le u\, \sqrt n\sigma_n\|M(T)^{-1}\|.
\end{equation}

Consider the inversion of a positive definite matrix $A$ using the identity $A^{-1}=R_A^{-1}R_A^{-*}$, where $R_A$ is the Cholesky factor of $A$. Let $\wt R_A$ denote the computed Cholesky factor that satisfies $\wt R_A^*\wt R_A=A+H$ in \eqref{eq:chol0} and \eqref{eq:cholD}. If $\wt V \deq{} \hbox{fl}(\wt R_A^{-1})$, then by \eqref{eq:Tinv0} we have $\wt V=\wt R_A^{-1} +E$, where
$|E|\ledot u\, \sigma_n M(R_A)^{-1}$. Moreover, denote $\fl(A^{-1})=\fl(\wt V \wt V^*)$.
Relying on \eqref{eq:mm2}, we find that
\[
\hbox{fl}(A^{-1})=\hbox{fl}( \wt V \wt V^*)=(\wt R_A^{-1}+E)(\wt R_A^{-*}+E^*)+W,\quad |W|\ledot u\, n|\wt V|\cdot|\wt V^*|\doteq u\, n |\wt R_A^{-1}|\cdot|\wt R_A^{-*}|,
\]
so that $G\deq{}\fl(A^{-1})-A^{-1}\doteq R_A^{-1}E^*+ER_A^{-*}-A^{-1}HA^{-1}+W$.
Taking norms, we get
\[
    \|G\|_F \ledot 2\|R_A^{-1}\|\cdot\|E\|_F+\|A^{-1}\|^2\cdot\|H\|_F+\|W\|_F.
\]
Since $\|E\|_F\ledot u\,\sigma_n\|M(R_A)^{-1}\|_F$, $\|W\|_F\ledot u\,n\|\wt R_A^{-1}\|_F^2$, and $\|R_A^{-1}\|^2=\|A^{-1}\|$,
from \eqref{eq:cholD} we obtain
\begin{equation}\label{eq:boundM}
\begin{split}
    &\|G\|_F\ledot u\, \|A^{-1}\|\bigl(n(n+1)\cond (A)+2\sigma_n\psi(R_A)+n^2\zeta\bigr),\\
    &\psi(R_A) \deq\frac{\|M(R_A)^{-1}\|_F}{\|R_A^{-1}\|}, \quad
    \zeta=\frac{\|\wt R_A^{-1}\|^2}{\|A^{-1}\|}.
\end{split}
\end{equation}
Observe that, since $\wt R_A$ is an approximation of $R_A$ and $\|R_A^{-1}\|^2=\|A^{-1}\|$, one should expect that $\zeta\approx 1$.

\subsection{Rounding error analysis of \cref{alg:1}}\label{sec:alg1}
{
The $i$th step of \cref{alg:1} has the form $S_i = s_i(S_1,\ldots,S_{i-1},A,B)$, where $s_i(\cdot)$ is the computation performed at step $i$.  We denote by $\widetilde S_i$ the matrix obtained after performing step $i$ in floating-point arithmetic and by $E_i = \widetilde S_i - s_i(\widetilde S_1,\ldots,\widetilde S_{i-1},A,B)$ the corresponding error at step $i$. We use an analogous notation for the decompositions on Line \ref{ln:ss-4} of \cref{alg:1} and on Lines \ref{ln:sc-1} and \ref{ln:sc-4} of \cref{alg:2}.

In floating-point arithmetic, the steps of \cref{alg:1} can be written as follows.
\begin{enumerate}
    \item $\widetilde S_1=A^{1/2}+E_1$
    \item $\widetilde S_2=\widetilde S_1^{-1}+E_2$\label{it:mstep-2}
    \item $\widetilde S_3=\widetilde S_2B\widetilde S_2^*+E_3$
    \item If $\widetilde S_3\deqr{}\widehat Q \widehat \Lambda  \widehat Q^*$ is the exact Schur decomposition of $\widetilde S_3$, then we denote the factors actually computed in floating-point arithmetic by $\widetilde Q=\widehat Q+E_Q$, $\widetilde \Lambda =\widehat \Lambda +E_\Lambda $, respectively.
    \item $\widetilde S_4=\widetilde Q(f(\widetilde \Lambda )+E_f)\widetilde Q^*+E_4$
    \item $\widetilde S_5=\widetilde S_1\widetilde S_4 \widetilde S_1^*+E_5$\label{it:mstep-6}
\end{enumerate}
For $i = 1, \dots, 5$, the matrix $E_i$ accumulates the rounding errors occurring at step $i$. The Hermitian matrices $E_1$, $E_2$, and $E_3$ contain the errors occurring when computing the square root of $A$, the inverse of $\widetilde S_1$, and the product $\widetilde S_2B\widetilde S_2$, respectively.
The matrices $E_Q$ and $E_\Lambda $ are defined as
\begin{equation}\label{eq:eq}
E_Q \deq{} \wt Q-\wh Q,\quad E_\Lambda \deq{} \wt \Lambda -\wh \Lambda ,
\end{equation}
where $\widetilde S_3=\widehat Q\widehat \Lambda \widehat Q^*$ is the exact Schur decomposition of $\widetilde S_3$, and $\wt Q$ and $\wt \Lambda$ are the factors computed in floating-point arithmetic.
We denote by $\lambda_i,\wh\lambda_i$, and $\wt \lambda_i$ the diagonal entries of the diagonal matrices $\Lambda$, $\wh\Lambda$, and $\wt\Lambda$, respectively.
Therefore, $E_\Lambda$ is also diagonal.
Finally, $E_f$ is diagonal, and its entries are the errors in the floating-point computation of $f(\widehat \lambda_i)$, while $E_4$ and $E_5$ collect the errors in the floating-point computation of $\widetilde S_4$ and $\widetilde S_5$, respectively.

In practice, at step $i$ we want to bound the difference between $\widetilde S_i$, obtained by running the algorithm in floating-point arithmetic, and the exact, unknown value $S_i$; we denote this quantity by $F_i\deq{}\widetilde S_i-S_i$.

We assume that if the Hermitian matrix $X$ has pairwise distinct eigenvalues and $X=U\varGamma U^*$ is its Schur decomposition with eigenvalues sorted in decreasing order, then the factors $\widetilde U$ and $\widetilde \varGamma $ computed in floating-point arithmetic
{\color{revcol} satisfy
$\|\widetilde \varGamma -\varGamma \|\le \theta u \|X\|$
and $\|\widetilde U-U\|\le \theta u$ for some positive constant $\theta$, possibly depending on the matrix size~$n$}.
Finally, we assume that evaluating $f(x)$ at $x=\lambda$ is backward stable, so that $|\hbox{fl}(f(\lambda))-f(\lambda)|\le u\,\wh \theta |f'(\lambda)| $ for some positive constant $\wh \theta$.

In view of the assumptions above and of bounds
\eqref{eq:mmE} and \eqref{eq:boundM}, we have the error bounds
\begin{equation}\label{eq:as1}
   \begin{array}{ll}
    \|E_2\|_F\ledot u\,\|A^{-1/2}\|(\theta_1\cond(A)^{1/2}+2\sigma_n\psi(R) +\zeta n^2),\qquad
    &\|E_4\|_F\ledot u\,\theta_2\|f(\wh \Lambda) \|_F,\\[1ex]
    \|E_3\|_F\ledot u\,\theta_2\|S_2\|^2\!\!\cdot \|B\|_F=u\,\theta_2\|A^{-1}\|\cdot \|B\|_F,
    &\|E_5\|_F\ledot u\,\theta_2\|A\|\cdot \|f(\wh \Lambda)\|_F,
 \end{array}
\end{equation}
where $R$ is the Cholesky factor of $A^{1/2}$, $\psi(R) \deq \|M(R)^{-1}\|_F/\|R^{-1}\|$, $\theta_1=n(n+1)$,
and  $\theta_2=2n^{3/2}$. To bound $\|E_5\|_F$, we have used $S_4=S_1^{-1}S_5S_1^{-*}$.
Moreover, we assume that there exist $\theta_3,\theta_4>0$,
possibly depending on the matrix size~$n$, such that
 \begin{equation}\label{eq:as1bis}
 \begin{array}{lll}
    \|E_f\|_F\ledot u\, \theta_3 \|f'(\wh \Lambda )\|_F\quad &
 {\color{revcol}   \|E_\Lambda \|\ledot u\,\theta_4\|S_3\|\le \|A^{-1}\|\cdot\|B\|,\quad } &
    \|E_Q\|\ledot u\,\theta_4.
 \end{array}
\end{equation}

The diagonals of $\Lambda$ and $\wh\Lambda$ contain the eigenvalues of $S_3$ and $\wt S_3=S_3+F_3$, respectively. Equation \eqref{eq:alg1-errors} below shows that $\|F_3\|=O(u)$, and by Weyl's inequality, we can conclude that

\begin{equation}\label{eq:lambda}
\|\wh\Lambda-\Lambda\|\le \|F_3\|=O(u)
\end{equation}
so that we may replace $\wh\Lambda$ in \eqref{eq:as1} and  \eqref{eq:as1bis} with $\Lambda$.

We are ready to perform the rounding error analysis of \cref{alg:1}. First of all, observe that since $\wt S_1^{\;2}=A+\Delta$, we may interpret the floating-point output of \cref{alg:1} as the result obtained by running steps \ref{it:mstep-2}--\ref{it:mstep-6} in floating-point arithmetic to evaluate $\varphi(\wt A,B)=\wt Af(\wt A^{-1}B)$, where $\wt A=A+\Delta$ and $\|\Delta\|_F$ is bounded in \eqref{eq:ersqrt}. In other words, we may assume that the error $E_1$ in the floating-point computation  of $A^{1/2}$ is zero. Thus, the overall error is given by the sum of two components: the perturbation error
\[
    \mathcal D_1=\|\varphi(\wt A,B)-\varphi(A,B)\|_F,\quad \|\wt A-A\|_F=\|\Delta\|_F\leq u\, c\, n^3\|A^{1/2}\|_F^2,
\]
and the error $F_5=\wt S_5-S_5$ obtained with $F_1=0$.
We now analyze these two components separately.

The relative perturbation error $\mathcal D_1/\|\varphi(A,B)\|_F$ can be bounded by the product of the relative perturbation $\|\Delta\|_F/\|A\|_F$ and the condition number $\hbox{cond}(\varphi;A,B)$ of the matrix function $\varphi(A,B)$ analyzed in \cref{sec:conditioning}. Therefore,
\begin{equation}\label{eq:D}
\mathcal D_1\ledot\mathcal E_0,\quad \mathcal E_0= \|S_5\|_F\|\Delta\|_F\hbox{cond}(\varphi;A,B)/\|A\|_F.
\end{equation}

To analyze the error $F_5=\wt S_5-S_5$, a simple formal manipulation using the Neumann series, the definition $F_i=\widetilde S_i-S_i$, and the assumption that $F_1=E_1=0$,  gives
\begin{equation}\label{eq:alg1-errors}
\begin{aligned}
    F_2 &= E_2,\\
    F_3 &= E_3+(S_2+F_2)B(S_2+F_2)-S_3\doteq E_3+F_2BS_2+S_2BF_2.
\end{aligned}
\end{equation}
In view of \eqref{eq:F2}, taking the Frobenius norm of~\eqref{eq:alg1-errors} yields
$\|F_2\| \ledot \|E_2\|_F+\|A^{-1/2}\|^2\|E_1\|_F,$ and
$\|F_3\|_F\ledot \|E_3\|_F+2\|B\|\cdot\|A^{-1/2}\|\cdot\|E_2\|_F$,
and by using \eqref{eq:as1} we can write
\begin{equation}\label{eq:F3}
\begin{split}
&\|F_3\|_F\ledot  u\,\theta_2\|A^{-1}\|\cdot\|B\|_F+2\|B\|\cdot\|A^{-1/2}\|\cdot\|E_2\|_F
{\color{revcol} \ledot u\|A^{-1}\|\cdot \|B\| \chi}\\
&{\color{revcol}\chi=(2\theta_1\cond(A)^{1/2}+4\sigma_n\psi(R)+
{\color{revcol} (2\zeta +2)n^2}
)},
\end{split}
\end{equation}
where $R$ is the Cholesky factor of $A^{1/2}$,
$\psi$ {\color{revcol} and $\zeta$ are} defined in \eqref{eq:boundM},
and we have used the fact that any positive definite $X \in \Cnn$ satisfies $\| X^{1/2} \|^2 = \|X\|$.

A more detailed analysis is needed for $F_4=\widetilde S_4-S_4$. From \eqref{eq:eq} and the definition of $\widetilde S_4$, we have
\begin{equation}\label{eq:alg1-errors1}
\begin{aligned}
    F_4 &=E_4+ \widetilde Q(f(\widetilde \Lambda )+E_f)\widetilde Q^*-S_4\\
        &\doteq E_4+(\wh Q+E_Q)\left(f(\wh \Lambda  )+f'(\wh \Lambda )E_\Lambda +E_f\right)(\wh Q+E_Q)^*-S_4\\
        &\doteq E_4+E_Qf( \wh\Lambda ) \wh Q^*+\wh Q\left(f'( \wh \Lambda )E_\Lambda +E_f\right) \wh Q^*+\wh Q f(\wh\Lambda )E_Q^*
        + \wh Q f(\wh \Lambda )\wh Q^*-S_4,
\end{aligned}
\end{equation}
where we have used the fact that $E_\Lambda$  as well as $f(\wh\Lambda)$ and $f'(\wh\Lambda)$ are diagonal. Now, from \eqref{eq:lambda} we have $\|\wh\Lambda-\Lambda\|\le \|F_3\|=O(u)$ so that $E_Qf(\wh\Lambda)\wh Q^*\doteq E_Qf(\Lambda)\wh Q^*$ and $\wh Qf'(\wh\Lambda)E_\Lambda\doteq \wh Qf'(\Lambda)E_\Lambda$. Therefore, we may rewrite \eqref{eq:alg1-errors1} as
\begin{equation}\label{eq:alg1-f}
F_4\doteq E_4+E_Qf( \Lambda ) \wh Q^*+\wh Q\left(f'( \Lambda )E_\Lambda +E_f\right) \wh Q^*+\wh Q f(\Lambda )E_Q^*
        +\wh Q f(\wh \Lambda )\wh Q^*-S_4.
\end{equation}

Since $S_4=Qf(\Lambda )Q^*=f(Q\Lambda Q^*)=f(S_3)$ and $\wh Qf(\wh \Lambda )\wh Q^*=f(\wh Q\wh \Lambda \wh Q^*)=f(\wt S_3)$,
the last two terms in \eqref{eq:alg1-f} can be written as
$
\wh Q f(\wh \Lambda )\wh Q^*-S_4=f(\widetilde S_3)-f(S_3)\doteq \Df{f}{S_3}{F_3},
$
where $\Df{f}{S_3}{F_3}$ is the Fréchet derivative of $f(X)$ at $X=S_3$ with increment $F_3$. %
Thus, we get
\begin{equation}\label{eq:alg1-errors2}
\begin{split}
F_4&\doteq  E_4+E_Qf(\Lambda )\wh Q^*+ \wh Q f( \Lambda )E_Q^*+ \wh Q\left(f'(\Lambda )E_\Lambda +E_f\right) \wh Q^*
+ \Df{f}{S_3}{F_3}.
\end{split}\end{equation}
Finally,
for $F_5=\widetilde S_5-S_5$ we get
\begin{equation}\label{eq:alg1-errors3}
\begin{split}
F_5&=E_5+S_1(S_4+F_4)S_1^*-S_5
= E_5+S_1F_4S_1^*.
\end{split}
\end{equation}

Taking the Frobenius norm on both sides, and using \eqref{eq:F2} and $\|S_1\|\cdot\|S_1^*\|=\|A\|$, we get the inequality $\|F_5\|_F\ledot \|E_5\|_F+\|A\|\|F_4\|_F$. Using \eqref{eq:alg1-errors2} and the fact that $\|\wh Q\|=\|\wh Q^*\|=1$, we obtain
\[\begin{split}
\|F_5\|_F&\ledot \|E_5\|_F+\|A\|\bigl(\|E_4\|_F+2\|E_Q\|\cdot\|f(\Lambda)\|_F
+\|f'(\Lambda)E_\Lambda+E_f\|_F\bigr)
+\|A\|\cdot \|\Df{f}{S_3}{F_3}\|_F.
\end{split}\]
{\color{revcol} Let the constants $\mathcal E_1$ and $\mathcal E_2$ denote, respectively, the sum of the first two terms and the third term in the equation above. } Using \eqref{eq:as1} in the above expression together with \eqref{eq:as1bis}, \eqref{eq:lambda} {\color{revcol} and \eqref{eq:F3} } yields
\begin{equation}\label{eq:bound1}
\begin{split}
    &\|F_5\|_F\ledot \mathcal E_1+\mathcal E_2,\\
     &
     \mathcal E_1=u\,\|A\|\bigl((\theta_2+2\theta_4 )\|f(\Lambda )\|_F+(\theta_3+\theta_4
     {\color{revcol}\|B\|\cdot\|A^{-1}\|})\|f'(\Lambda )\|_F\bigr),\\
    &\mathcal E_2={\color{revcol} u\,\mu(A) \|F\|_F\cdot
    \|B\|\cdot \chi}
\end{split}
\end{equation}
where  $F=(f[\lambda_i,\lambda_j])_{ij}$, $\lambda_i$ are the eigenvalues of $A^{-1/2}BA^{-1/2}$ and the quantity {\color{revcol} $\chi$ is defined in \eqref{eq:F3}}.
To see that $\mathcal E_2$ bounds $\|A\|\cdot \|\Df{f}{S_3}{F_3}\|_F$, note that the Daleckiĭ--Kreĭn theorem implies that $\Df{f}{S_3}{F_3}=Q(F\circ (Q^*F_3Q))Q^*$, where $Q^*Q=I$, and thus
\begin{equation}\label{eq:nfd}
\begin{split}
\|\Df{f}{S_3}{F_3}\|_F
 = \|Q(F\circ (Q^*F_3Q))Q^*\|_F =\|F\circ (Q^*F_3Q)\|_F \le  \|F\|_F\cdot \|F_3\|_F.
\end{split}
\end{equation}
Here, we have used the property $\|F\circ (Q^*F_3Q)\|\le\|F\|\cdot\|Q^*F_3Q\|=\|F\|\cdot\|F_3\|$,
which holds for any unitarily invariant norm~\cite[Fact~11.10.96]{bern18}.
{\color{revcol} Using \eqref{eq:F3} in \eqref{eq:nfd} provides the expression for $\mathcal E_2$}.

Therefore, for the overall error, we get the bound
\begin{equation}\label{eq:bound1bis}
\mathcal D_1 + \|F_5\|_F\le \mathcal E_0+\mathcal E_1+\mathcal E_2,
\end{equation}
where $\mathcal E_0$ is defined in \eqref{eq:D} and $\mathcal E_1$ and $\mathcal E_2$ are defined in \eqref{eq:bound1}.

\subsection{Rounding error analysis of \cref{alg:2,alg:3}}\label{sec:alg2}
The error analysis of \cref{alg:2} is similar.
The two algorithms differ in the computation of the matrix~$S_1$, which is the square root of $A$ in \cref{alg:1} and the Cholesky factor of $A$ in \cref{alg:2}.

Let $A\deqr{}R_A^*R_A$ be the Cholesky decomposition of $A$ and let $\wh R_A$ be the matrix computed in floating-point arithmetic by the Cholesky algorithm. We may use \eqref{eq:chol0} to interpret the floating-point result of \cref{alg:2} as the result obtained by evaluating $\varphi(\wt A,B)=\wt Af(\wt A^{-1}B)$ at \mbox{$\wt A \deq{} \wh R_A^*\wh R_A=A+H$} in floating-point arithmetic.
The final error is then the sum of two terms: the perturbation error
\begin{equation}\label{eq:Hbis}
\mathcal D_2=\|\varphi(\wt A,B)-\varphi(A,B)\|_F,\quad \hbox{where} \quad \|H\|_F=\|\wt A-A\|_F\ledot u\, (n+1)\| A \|_F,
\end{equation}
and the roundoff term $F_5 = \wt S_5 - S_5$, obtained with $F_1=0$, {\color{revcol}   $S_1=R_A^*$ and $S_2=R_A^{-*}$}.
Therefore, we may repeat the analysis performed for \cref{alg:1}.
If we replace $\Delta$ by $H$ in \eqref{eq:D}, we obtain
\begin{equation}\label{eq:Dbis}
\mathcal D_2\ledot \mathcal E_0',\quad \mathcal E_0'=\|S_5\|_F\|H\|_F
\hbox{cond}(\varphi;A,B)/
\|A\|_F\le
u(n+1)\|S_5\|_F\cond(\varphi;A,B)
.
\end{equation}

The bound on $F_5=\wt S_5-S_5$ is also analogous.
We compute $S_3$ using the faster method described on Line \ref{ln:sc-3a} of \cref{alg:2}, without forming
$S_2=S_1^{-1}$ but applying back substitution in
$S_3=S_1^{-1}BS_1^{-*}$.
As $S_2$ is no longer computed, we have $E_2=F_2=0$.
Then, from \eqref{eq:tbt} with the bound rewritten in the form $2u\,\sigma_n\|T^{-1}\|\cdot\|M(T)^{-1}\|_F\cdot\|B\|$, we obtain
\begin{equation}\label{eq:F3_}
\|F_3\|_F=\|E_3\|_F\ledot 2 u\,\sigma_n\|R_A^{-1}\|\cdot\|M(R_A)^{-1}\|_F\cdot \|B\|=
2u\,\sigma_n \|A^{-1}\|\cdot \|B\|\psi(R_A),
\end{equation}
where $\psi$ is defined in \eqref{eq:boundM}.
In~\eqref{eq:F3_}, we have used the identity \eqref{eq:nrm} and the fact that $\|R_A\|=\|A\|^{1/2}$.
Equation \eqref{eq:alg1-errors2},
remains valid for \cref{alg:2}, and bounding $\|F_3\|_F$ with \eqref{eq:F3_}~yields
\begin{equation}\label{eq:bound2}
\begin{split}
\|F_5\|_F\ledot
\mathcal E_1+\mathcal E_2', \qquad
\mathcal E_2'=2u\,\sigma_n\cond(A) \psi(R_A) \|B\|\cdot \|F\|_F.
\end{split}
\end{equation}
Thus, the total error for \cref{alg:2} is bounded by
\begin{equation}\label{eq:bound2bis}
\|F_5\|_F+\mathcal D_2\ledot\mathcal E_0'+\mathcal E_1+\mathcal E_2',
\end{equation}
where $\mathcal E_0'$ is defined and bounded in \eqref{eq:Dbis}, $\mathcal E_1$ is defined in \eqref{eq:bound1}, and $\mathcal E_2$ in \eqref{eq:bound2}.

Next, we compare the upper bounds \eqref{eq:bound1bis} and \eqref{eq:bound2bis} on the error generated by \cref{alg:1,alg:2}, respectively, which differ in two of the three quantities in the right-hand side. We have
\begin{equation*}
    r_0 \deq{} \frac{\mathcal E_0}{\mathcal E_0'} = \frac{\|\Delta\|_F}{\|H\|_F},
    \qquad
    r_1 \deq{} \frac{\mathcal E_2}{\mathcal E_2'} > \frac{\theta_1}{1+\theta_1}\frac{\cond(A)^{1/2}}{\psi(R_A)}.
\end{equation*}
{\color{revcol} To prove the lower bound on $r_1$, from \eqref{eq:bound2} and \eqref{eq:bound1} we get
\[
\frac{\mathcal E_2}{\mathcal E_2'}=\frac {\chi}{2\sigma_n\psi(R_A)}>\frac{\theta_1\mu(A)^{1/2}}{\sigma_n\psi(R_A)}.
\]
The proof follows from the identity $\sigma_n=1+\theta_1$.}

{\color{revcol}From the bound \eqref{eq:cholD} to $\|H\|_F$,  we get $r_0\ge \|\Delta\|_F/(u(n+1)\|A\|_F)$. We do not have a lower bound on $\|\Delta\|_F$, but assuming that the bound \eqref{eq:ersqrt} on $\|\Delta\|_F$ is sharp, we get the heuristic estimate $r_0 \approx cn^3/(n+1)$. In other words, $r_0$ grows as $n^2$ and is expected to get worse for large values of $n$.
We also observe that $r_1$ grows as $\cond(A)^{1/2}$ when $\psi(R_A)$ is close to 1.}
This suggests that, numerically, \cref{alg:2} will behave better than~\cref{alg:1} {\color{revcol} for large values of $n$ and $\cond(A)$.}

The function $\psi$, which appears in both error bounds, is at least 1 and can be arbitrarily large~\cite[Prob.~8.2]{higham:book02}, but this is not always the case: if $A \in \Cnn$ is an M-matrix, for example, then $\psi(A) \le\sqrt n$.

The analysis for \cref{alg:3} is similar. The rounding errors in the Cholesky decomposition of $B$ can be viewed as the result of a perturbation on $B$, so that the perturbation error in \eqref{eq:Hbis} becomes
\[
\mathcal D=\|\varphi(\wt A,\wt B)-\varphi(A,B)\|,\quad \|\wt A-A\|\ledot nu\||R_A^*|\|^2,\quad \|\wt B-B\|\ledot nu\||R_B^*|\|^2,
\]
where $R_A$ and $R_B$ are the Cholesky factors of $A$ and $B$, respectively.
We omit the analysis which is entirely analogous to that of the two cases already discussed.

\section{Numerical experiments}
\label{sec:num-exp}

We now compare the algorithms presented in this work in terms of accuracy and performance.
Our experiments were run using the GNU/Linux version of MATLAB 2021b on a machine equipped with 16 GiB of RAM and a 4-core Intel i3-7100 processor running at 3.9 GHz.

We consider five implementations.
\begin{itemize}
    \item \algnaive: the algorithm obtained with the MATLAB command \verb|S = A * f (A \ B)|;
    \item \algss: an implementation of~\cref{alg:1};
    {\color{revcol}
    \item \algscsc: an implementation of~\cref{alg:ref};
    }
    \item \algsc: an implementation of~\cref{alg:2};
    \item \algscp: an implementation of~\cref{alg:3};
\end{itemize}
\Cref{alg:1,alg:2,alg:3} have been implemented in their faster variants.
To gauge accuracy, we use the relative forward error
\begin{equation}
    \label{eq:rel-fwd-err}
    \frac{\| \wt S - S \|_F}{\| S \|_F},
\end{equation}
where $\wt S$ is the solution computed by one of our implementations and $S$ is a reference solution computed in high precision arithmetic with the Advanpix Multiprecision Computing Toolbox for MATLAB using the default precision of 34 decimal digits, which corresponds to binary128 precision~\cite{ieee19}.

For a given size \texttt{n} and condition number \texttt{cnd}, the matrices $A$ and $B$ are generated randomly with:
{\color{revcol}}
\begin{lstlisting}[language=matlab]
    a = (1 / cnd - 1) / (n - 1);
    d = [1:n] * a + (1 - a);
    [Q, ~] = qr(rand(n) - rand(n));
    A = Q * diag(d) * Q';
\end{lstlisting}
This ensures that the $n$ eigenvalues of $A$ are uniformly distributed in the closed interval $[\texttt{cnd}^{-1}, 1]$.
}

To obtain statistically significant results, for each configuration of the parameters
\texttt{n} and \texttt{cnd} we repeat the computation with 50 pairs of random matrices,
{\color{revcol} reporting the median of timings and errors.}
To prevent large growth in the entries of the result, in all our experiments, we use $f(x) = \log(x)$.

\begin{figure}
    \centering%
    \begin{tikzpicture}[trim axis left, trim axis right]
    \begin{axis}[
    xlabel={$n$},
    legend pos=south east, %
    ymode = log,
    log base y=10,
    ymin = 5e-17, ymax = 1e-2,
    xmin = 0, xmax = 210,
    max space between ticks=30,
    cycle list name = methodlist
    ]
    \addplot table[x=n, y=naive] %
    {accuracy_sizeA.dat};
    \addlegendentry{\algnaive}
    \addplot table[x=n, y=sqrt_schur] %
    {accuracy_sizeA.dat}; \addlegendentry{\algss}
    \addplot table[x=n, y=cholesky_schur] %
    {accuracy_sizeA.dat}; \addlegendentry{\algsc}
    \addplot table[x=n, y=cholesky_schur_psd] %
    {accuracy_sizeA.dat}; \addlegendentry{\algscp}
    \addplot table[x=n, y=schur_schur] %
    {accuracy_sizeA.dat}; \addlegendentry{\algscsc}     
    \end{axis}
\end{tikzpicture}
    \hskip 40pt%
   \begin{tikzpicture}[trim axis left, trim axis right]
    \begin{axis}[
    xlabel={$n$},
    legend pos=south east, %
    ymode = log,
    log base y=10,
    ymin = 5e-17, ymax = 1e-2,
    xmin = 0, xmax = 210,
    max space between ticks=30,
    cycle list name = methodlist
    ]
    \addplot table[x=n, y=naive] %
    {accuracy_sizeAB.dat};
    \addlegendentry{\algnaive}
    \addplot table[x=n, y=sqrt_schur] %
    {accuracy_sizeAB.dat}; \addlegendentry{\algss}
    \addplot table[x=n, y=cholesky_schur] %
    {accuracy_sizeAB.dat}; \addlegendentry{\algsc}
    \addplot table[x=n, y=cholesky_schur_psd] %
    {accuracy_sizeAB.dat}; \addlegendentry{\algscp}
    \addplot table[x=n, y=schur_schur] %
    {accuracy_sizeAB.dat}; \addlegendentry{\algscsc}     
    \legend{}
    \end{axis}
\end{tikzpicture}
    \caption{Relative forward error~\eqref{eq:rel-fwd-err} of the different implementations on random positive definite $n\times n$ matrices of increasing size $n$, with $f(x)=\log(x)$. On the left, $A$ is ill conditioned ($\cond(A)=10^8$), while $B$ is not ($\cond(B)=10$); on the right, both matrices are ill conditioned ($\cond(A)=\cond(B)=10^8$). The reported values are the median of 50 measurements across different matrices.}
    \label{fig:experiment1}
\end{figure}
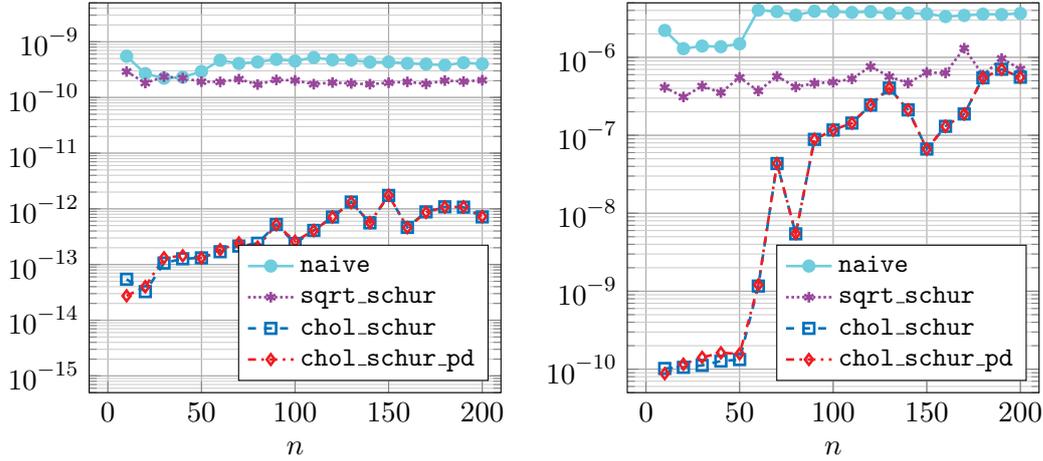

\Cref{fig:experiment1} reports the relative error~\eqref{eq:rel-fwd-err} as $n$ increases from 10 to 200 with step 10. In the panel on the left, $A$ and $B$ have condition numbers of $10^8$ and $10$, respectively, making $ A$ ill conditioned and $ B$ well conditioned.  In the panel on the right, both matrices are ill conditioned, with $\cond(A)=\cond(B)=10^8$.
In both cases, \algnaive{} and \algscsc{} are the algorithms most sensitive to the conditioning of the problem, while \algss{} has a comparable behavior but is typically slightly more accurate.
The errors of \algsc{} and \algscp{} are almost indistinguishable, and these two algorithms achieve a much better accuracy, especially when $A$ and $B$ are not well conditioned.

\begin{figure}
    \centering%
    \begin{tikzpicture}[trim axis left, trim axis right]
    \begin{axis}[
    xlabel={$\cond$},
    legend pos=north west,
    ymode = log,
    xmode = log,
    log base y=10,
    log base x=10,
    ymin = 5e-17, ymax = 5e0,
    xmin = 5e-1, xmax = 5e15,
    max space between ticks=30,
    cycle list name = methodlist
    ]
    \addplot table[x=cond, y=naive] {accuracy_condA.dat};\addlegendentry{\algnaive}
    \addplot table[x=cond, y=sqrt_schur] {accuracy_condA.dat}; \addlegendentry{\algss}
    \addplot table[x=cond, y=cholesky_schur] {accuracy_condA.dat}; \addlegendentry{\algsc}
    \addplot table[x=cond, y=cholesky_schur_psd] {accuracy_condA.dat}; \addlegendentry{\algscp}
    \addplot table[x=cond, y=schur_schur] {accuracy_condA.dat}; \addlegendentry{\algscsc}   
    
    \addplot table[x=cond, y=condcond] {accuracy_condA.dat};    
    \addlegendentry{$u \cdot \hbox{cond}(\varphi; A, B)$}
    \end{axis}
\end{tikzpicture}%
    \hskip 40pt%
    \begin{tikzpicture}[trim axis left, trim axis right]
    \begin{axis}[
    xlabel={$\cond$},
    legend pos=south east,
    ymode = log,
    xmode = log,
    log base y=10,
    log base x=10,
    ymin = 5e-17, ymax = 5e0,
    xmin = 5e-1, xmax = 5e15,
    max space between ticks=30,
    cycle list name = methodlist
    ]
    \addplot table[x=cond, y=naive] {accuracy_condAB.dat};\addlegendentry{\algnaive}
    \addplot table[x=cond, y=sqrt_schur] {accuracy_condAB.dat}; \addlegendentry{\algss}
    \addplot table[x=cond, y=cholesky_schur] {accuracy_condAB.dat}; \addlegendentry{\algsc}
    \addplot table[x=cond, y=cholesky_schur_psd] {accuracy_condAB.dat}; \addlegendentry{\algscp}
    \addplot table[x=cond, y=schur_schur] {accuracy_condAB.dat}; \addlegendentry{\algscsc}   
    
    \addplot table[x=cond, y=condcond] {accuracy_condAB.dat};    
    \addlegendentry{$u \cdot \hbox{cond}(\varphi; A, B)$}
    \legend{}
    \end{axis}
\end{tikzpicture}%
    \caption{Relative forward error~\eqref{eq:rel-fwd-err} of the different implementations on random positive definite $n\times n$ matrices $A$ and $B$ with varying condition numbers for $n=25$ and $f(x) = \log(x)$, together with the quantity $u \cdot \hbox{cond}(\varphi; A, B)$. On the left, the condition number of $A$ grows ($\cond(A)=\mu$) while $B$ remains well conditioned ($\cond(B) = 10)$; on the right, the condition number of both matrices grows ($\cond(A)=\cond(B)=\mu$). The accuracy is measured using the relative forward error~\eqref{eq:rel-fwd-err}. The values reported are the arithmetic mean of 50 measurements with different matrices.}
    \label{fig:experiment2}
\end{figure}
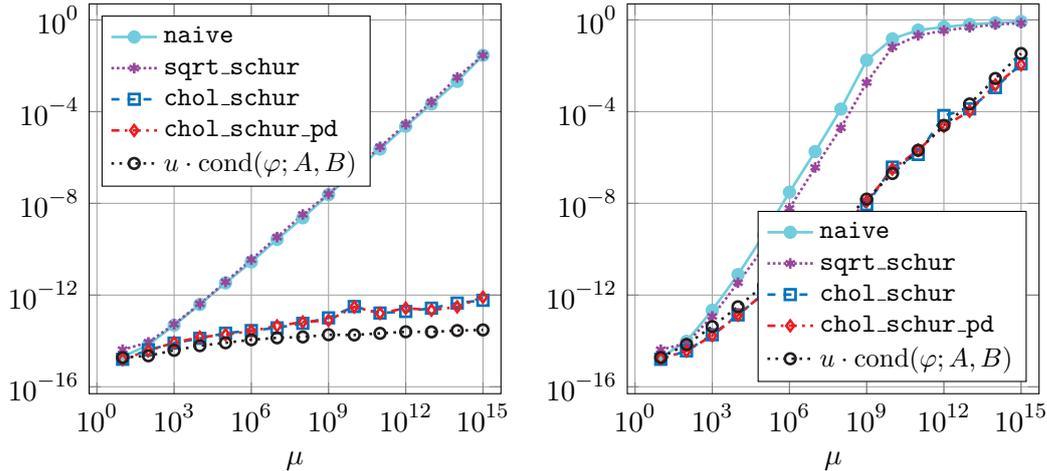

\Cref{fig:experiment2} reports the relative errors for matrices of size $n=25$ as the condition number of the matrix~$A$ increases. In the panel on the left, the matrix $B$ has condition number 10, while the condition number of $A$ grows as $10^i$ for $i=0,1,\ldots,15$.
In the panel on the right, the condition number of both $A$ and $B$ grows as $10^i$ for $i=0,1,\ldots,15$.
We compare the errors with the quantity $u\cdot \hbox{cond}(\varphi; A, B)$, which we compute by evaluating in floating-point arithmetic the definition~\eqref{eq:4}. For a backward stable algorithm, this quantity should have the same magnitude as the forward error.
Once again, the behavior of \algnaive{},
 \algss{}, and \algscsc{} is similar.
The errors of \algsc{} and
\algscp{} are almost indistinguishable. Their growth is more moderate, particularly when $B$ is well conditioned, and follows closely the trend of $u\cdot \hbox{cond}(\varphi; A, B)$, especially when $A$ and $B$ are both ill conditioned.

\Cref{fig:3} reports the timing of the algorithms as $n$ increases.
For all sizes considered, \algsc{} and \algscp{} are the fastest algorithms, with a remarkably similar runtime;
\algss{} and \algscsc{} perform similarly and are somewhat slower than \algsc{} and \algscp{}. Finally, \algnaive{} is 1 to 2 orders of magnitude slower than \algsc{} and \algscp{}.

\begin{figure}
    \centering%
    \begin{tikzpicture}[trim axis left, trim axis right]
    \begin{axis}[
    xlabel={$n$},
    legend pos=south east,
    ymode = log,
    log base y=10,
    ymin = 5e-5, ymax = 5e0,
    xmin = -50, xmax = 1050,
    max space between ticks=30,
    cycle list name = methodlist
    ]
        \addplot table[x=n, y=naive] {time.dat};
        \addlegendentry{\algnaive}
        \addplot table[x=n, y=sqrt_schur] {time.dat}; \addlegendentry{\algss}
        \addplot table[x=n, y=cholesky_schur] {time.dat}; \addlegendentry{\algsc}
        \addplot table[x=n, y=cholesky_schur_psd] {time.dat}; \addlegendentry{\algscp}
       \addplot table[x=n, y=schur_schur] {time.dat}; \addlegendentry{\algscsc}       %
    \end{axis}
\end{tikzpicture}
    \caption{Timings of the different implementations on $n\times n$ random positive definite matrices of increasing size $n$, with $f(x)=\log(x)$. The timings are expressed in seconds. The values reported are the medians of 50 measurements.}
    \label{fig:3}
\end{figure}
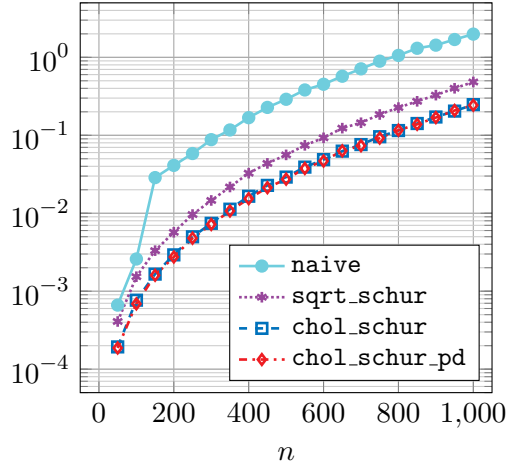

These results suggest that \cref{alg:2} (\algsc{}) and \cref{alg:3} (\algscp{}) are similar in terms of both accuracy and performance. In fact, the corresponding curves are almost indistinguishable and overlap in all figures. These two algorithms generally outperform the other alternatives.
In terms of accuracy, these results are consistent with the error analysis in \cref{sec:chsh-alg}, where we obtained a bound on the error of \cref{alg:2} that is smaller than that on the error of \cref{alg:1}. The experiments show something stronger: for moderate $n$, while the errors of \cref{alg:1,alg:ref} grow with the condition number of $A$, the errors of \cref{alg:2,alg:3} depend only on $\cond(B)$ and not on $\cond(A)$.

We repeated the experiment with $f(x) = x^{t}$, for $t \in (0, 1)$, and generating random matrices with different strategies.
In all cases we obtained similar results.

\section{Conclusions}
\label{sec:conclusions}
We studied the problem of computing $Af(A^{-1}B)$ when $A$ and $B$ are Hermitian matrices of moderate size with $A$ positive definite.
We studied the problem's conditioning and considered five different algorithms. We considered the naive approach \algnaive{}, which first computes $A^{-1}B$ and then the matrix function $f(A^{-1}B)$, and \algss{}, which computes the square root of $A$ and uses the equivalent expression $A^{1/2}f(A^{-1/2}BA^{-1/2})A^{1/2}$. We introduced two new algorithms, \algsc{} and \algscp{}, which rely on the Cholesky factorization of $A${\color{revcol}, and we compared them with \algscsc{}, which relies on a double Schur decomposition}.

A rounding error analysis showed that the upper bound on the forward error of \algsc{} is better than that on the forward error of \algss{}.
Moreover, the computational complexity of the former is lower than that of the latter.

The results of this analysis are confirmed by the numerical experiments. The algorithms can be divided into two classes, one formed by \algnaive{}, \algss{}, and \algscsc{}, and the other comprising \algsc{} and \algscp{}. The algorithms in the latter class are generally faster and have a significantly smaller forward error. \algnaive{} is the slowest algorithm overall.

This work does not address the related problem of computing $Af(A^{-1}B)v$, where $A$ and $B$ are large and sparse and $v$ is a vector. The algorithms developed here would not be appropriate in that case, because the matrix $Af(A^{-1}B)$ is typically dense, too large to store in memory, and not needed in the application. These problems require the development of different techniques, such as those based on the Krylov subspace and on quadrature, which will be the topic of future work.

\section*{Appendix}
Here we explain how the asymptotic computational costs in \cref{tab:cost} can be obtained.
The costs for matrix multiplication, matrix inversion, Cholesky factorization, and Schur decomposition can be found in \cite[App.~C]{higham:book08}.

{\small
\begin{table}
    \centering
    \caption{%
    Arithmetic cost of basic matrix operations for $n\times n$ matrices. $A$ and $B$ are general matrices, $T$ is lower or upper triangular, $L$ is lower triangular, $U$ is upper triangular,
    $H$ is Hermitian positive definite,
    and $S$ is either general or Hermitian.}
    \label{tab:cost}
    \begin{tabularx}{\linewidth}{Xcc}
    \toprule
    \multirow{2}[2]{*}{Operation} & \multicolumn{2}{c}{Number of flops}\\
    \cmidrule{2-3}
    & $S$ general & $S$ Hermitian \\
    \midrule
    Matrix multiplication $S=AB$, Matrix inversion $S=A^{-1}$     & $2n^3$ & $n^3$   \\
    Matrix multiplication $S=LU$     & $2n^3/3$ & $n^3/3$   \\
    Matrix multiplication $S=TB$ and  $S=T^{-1}B$ &  $n^3$ & $n^3/3$ \\
    Matrix multiplication $S=L_1L_2$ and $S=L_1^{-1}L_2$& $n^3/3$ & -- \\
    Cholesky factorization $H=R^*R$& -- & $n^3/3$\\
    Schur decomposition $S=QTQ^*$ & $25n^3$ & $9n^3$\\
    Matrix square root $H^{1/2}$ & -- & $10n^3$\\
    \bottomrule
    \end{tabularx}
\end{table}
}

The product $S=TB$, where $T$ is (lower) triangular and  $B$ is dense and unstructured, is equivalent to computing the action of $T$ on $n$ vectors of length $n$. Each product requires $n^2$ flops, so the overall cost is $n^3$ flops. Solving the linear equation $TX=B$ has the same cost, because applying a triangular matrix to a vector and solving a triangular system require the same number of operations.

If the product $S=TB$ is Hermitian, we can split $S$, $T$, and $B$ in four blocks and write
\[
\begin{bmatrix}
    s_1&s_2^*\\s_2&S_3
\end{bmatrix}=
\begin{bmatrix}
    t_1&0\\t_2&T_3
\end{bmatrix}
\begin{bmatrix}
    b_1&b_2^*\\b_3&B_4
\end{bmatrix},
\]
where $S_3,T_3$, and $B_4$ are square matrices of size $n-1$. Then we have
\[ s_1=t_1b_1,\quad s_2^*=t_1b_2^*,\quad S_3=T_3B_4+t_2b_2^*.
\]
Since $s_1$ and $s_2$ can be computed with $n$ multiplications, computing the first row and column of $S$ requires only $O(n)$ flops. The matrix
$S_3$ is Hermitian, and it suffices to compute its upper-triangular part.
Computing the upper triangular part of $t_2b_2^*$ requires $(n-1)(n-2)/2$ multiplications, and accumulating this on $T_3B_4$ requires \mbox{$(n-1)(n-2)/2$} additions. Computing $T_3B_4$ is equivalent to solving a problem of size $n-1$. Therefore, if we denote by $c_n$ the cost of computing $TB$, we obtain the recurrence $c_n=c_{n-1}+n^2-2n+2$, which for $c_1=1$ yields \mbox{$c_n=n^3/3+O(n^2)$} flops. The same argument applies to the solution of $TX=S$ when $S$ is Hermitian.

If we partition the factors of the product $L_1L_2$ in a similar manner, we obtain the same recurrence, which shows that this product also requires $n^3/3+O(n^2)$ flops. The same argument applies to the solution of the linear system $L_1 S = L_2$, where $S$ is Hermitian.

The cost of computing the matrix square root
of $A$ as $A^{1/2}=QD^{1/2}Q^*$, where $A=QDQ^*$ is the Schur decomposition of $A$, is the sum of the flop counts of the Schur decomposition and one matrix multiplication with Hermitian result.

\section*{Acknowledgments}
The authors are grateful to Fabio Durastante and Volker Mehrmann for insightful discussions and for suggesting useful bibliographic references. {\color{revcol}They also thank the anonymous reviewers for constructive feedback that helped improve the paper, and in particular for suggesting to consider \cref{alg:ref}}.

\bibliographystyle{plain}
\bibliography{biblio.bib}

\end{document}